% ------------------------------------------------------------------------
% AMS-LaTeX Paper ******************************************************** % ------------------------------------------------------------------------
% Submitted:
% Final Version:
% Accepted:
% ------------------------------------------------------------------------ %
%%%%%%%%%%%%%%%%%%%%%%%%%%%%%%%%%%%%%%%%%%%%%%%%%%%%%%%%%%%%%%%%%%%%%%%%%%

% \cite[page ???]{Harris} 

\documentclass[12pt]{amsart}

% THEOREM Environments ---------------------------------------------------
 \newtheorem{thm}{Theorem}[section]
 
 \newtheorem{lem}[thm]{Lemma}
 \newtheorem{prop}[thm]{Proposition}
 \theoremstyle{definition}
 \newtheorem{defn}[thm]{Definition}
 \theoremstyle{remark}
 \newtheorem{rem}[thm]{Remark}
 \theoremstyle{definition}

% \numberwithin{equation}{}
% MATH -------------------------------------------------------------------

 \newcommand{\PP}{\mathbb{P}}

%\usepackage{showkeys}

%%%%%%%%%%%%%%%%%%%%%%%%%%%%%%%%%%%%%%%%%%%%%%%%%%%%%%%%%%%%%%%%%%%%%%%%%%%%%%%%
%%  Prints your review date and 'Draft Version' (From Josullvn, CS, CMU)
  
 % You can turn on or off this option.
% \reviewtimetoday{\today}{Draft Version}
%%%%%%%%%%%%%%%%%%%%%%%%%%%%%%%%%%%%%%%%%%%%%%%%%%%%%%%%%%%%%%%%%%%%%%%%%%%%%%%%

\usepackage{xypic} \usepackage{amsthm}

%%% ----------------------------------------------------------------------
\begin{document}

\title{3-dimensional sundials}
%\today{}

\author[E. Carlini]{Enrico Carlini}
\address[E. Carlini]{Dipartimento di Matematica, Politecnico di Torino, Turin, Italy}
\email{enrico.carlini@polito.it}

\author[M.V.Catalisano]{Maria Virginia Catalisano}
\address[M.V.Catalisano]{DIPTEM - Dipartimento di Ingegneria della Produzione, Termoenergetica e Modelli
Matematici, Universit\`{a} di Genova, Piazzale Kennedy, pad. D
16129 Genoa, Italy.} \email{catalisano@diptem.unige.it}

\author[A.V. Geramita]{Anthony V. Geramita}
\address[A.V. Geramita]{Department of Mathematics and Statistics, Queen's University, Kingston, Ontario, Canada, K7L 3N6 and Dipartimento di Matematica, Universit\`{a} di Genova, Genoa, Italy}
\email{Anthony.Geramita@gmail.com \\ geramita@dima.unige.it  }

%%% ----------------------------------------------------------------------

\begin{abstract}
Robin Hartshorne and Alexander Hirschowitz  proved that a generic collection of lines on $\mathbb P^n$, $n \geq 3$, has bipolynomial Hilbert Function. We extended this result to a specialization of the collection of generic lines, by considering a union of lines and $3$-dimensional sundials (i.e., a union of schemes obtained by degenerating pairs of  skew lines).
\end{abstract}

%%% ----------------------------------------------------------------------
\maketitle
%%% --------------------------------------------------------------------

\section{Introduction} 

In 1982, Robin Hartshorne and Alexander Hirschowitz wrote a beautiful article \cite{HartshorneHirschowitz}  in which they answered the following very natural question:  {\it What is the Hilbert function of a general union of $s$ lines in $\PP ^n$?}

There is a simple natural response, namely that the function is {\it bipolynomial}, i.e. if $X$ is the union of these $s$ generic lines in $\PP^n$ then the Hilbert function of $X$ in degree $d$ is
$$
H(X,d) = \min \{ {d+n\choose n}, s(d+1) \} \ \ \ \hbox{ for every $d$, $s$ and $n$ }
$$
(see \cite{CarCatGer2} for more about bipolynomial Hilbert functions).
The proof that this natural and obvious response is correct is far from obvious.  Indeed, the paper \cite{HartshorneHirschowitz}  consists of a lovely collection of interesting and ingenuous techniques which handle the various cases that arise in the author's method of answering this question.

It is in this paper that some of the foundations were laid for the {\it tour-de-force} by Alexander and Hirschowitz which resulted, in 1995, in their  solution to the classical Problem of Waring for polynomials (see \cite{AH95}).  This break-through result of Alexander and Hirschowitz has, in its turn, led to a long series of results whose goal is the solution of Waring type problems for Segre varieties (see \cite{CGG1},  \cite{CGG2},  \cite{CGG6}, \cite{AOP}), Grassmann varieties (see \cite{CGG3}), Varieties of Reducible Forms (see \cite{CaChGe}) and other kinds of varieties describing much studied classes of tensors.  The importance of all these developments is not solely in their beauty (which is considerable) but in their broad applicability to areas far beyond those purely in algebraic geometry (see \cite{ComMour}, \cite{BuClSh}, \cite{PRW}).  

%inserire Statistic

Thus the original paper of Hartshorne and Hirschowitz is a good example of the proverb ``mighty oaks from little acorns grow", although in this case the ``acorn" is not so little!

It is, on the one hand, because of the fundamental importance of the paper of Hartshorne and Hirschowitz and, on the other hand, because we need a generalization of their result to apply to some new problems, that we needed to revisit their work.  It turns out that a clear but broad generalization of the Hartshorne-Hirschowitz result is needed for a certain approach to Waring type problems for all the Segre and Segre-Veronese varieties.

\section {Preliminary considerations}

In order to explain what we will do in this paper, we will have to explain one of the many techniques used by Hartshorne and Hirschowitz in \cite{HartshorneHirschowitz}.

They first considered the following situation: let $L_1$ and $L_2$ be two general lines in $\PP^n$.  These lines, being general, generate a $\PP^3$ inside $\PP^n$.  Now, pick a point $P \in L_1$ (general) and let $L_2$ move (in the $\PP^3$ generated by it and $L_1$) so as to cross $L_1$ at the point $P$.  One now sees a (degenerate) plane conic -- but that is not the scheme that is the result of this degeneration.  In fact, \cite{HartshorneHirschowitz} show that this movement can be made to take place in a flat family and the result is a scheme that ``remembers" the $\PP^3$ in which we began, in the sense that the limiting scheme is the degenerate conic union the scheme defined by $\wp^2\mid _{\PP^3}$, where $\wp$ is the ideal of the point $P \in \PP^n$ (see 
  \cite[Lemma 2.5]{CarCatGer2}
for a proof for this degeneration).  Thus, we can visualize the limiting scheme as the degenerate plane conic formed by $L_1$ and the limit of $L_2$, along with a direction coming out of the plane of the degenerate conic which (along with the plane) generate the $\PP^3$ which contained $L_1$, $L_2$ at the start.  We have named a scheme formed in this way a {\it sundial}.

Now, one sundial in $\PP^n$ behaves (from the point of view of its Hilbert function) precisely like a pair of disjoint lines in $\PP^n$, but, this is no guarantee that a generic union of $s$ sundials in $\PP^n$ will behave (from the point of view of its Hilbert function) as if it were a generic collection of $2s$ lines in $\PP^n$.

The ``sundial scheme" is clearly {\it less} generic than the analogous scheme of generic lines.  But, despite this, in this paper we will show that, from the point of view of the Hilbert function, $s$ generic sundials behave exactly like 2s generic lines.

This then gives an extension of the Hartshorne Hirschowitz result as well as affording us more freedom in confronting problems involving general unions of linear spaces (both reduced and non-reduced) which we have discussed in \cite{CarCatGer3}.  To be more precise, our main theorem is the following (see Theorem \ref{sundialsinPn})

{\it
Let $X$ be a generic union of $s$ sundials in $\PP^n$.  Then the Hilbert function of $X$ in degree $d$ is bipolynomial and given by 
$$
H(X,d) = \min \left \{ {d+n\choose n}, 2s(d+1) \right \} \ \ \hbox{ for every $s$, $d$ and $n$.}
$$
}

\section{Basic facts and notation}\label{basicsection}

We will always work over an algebraically closed field $k$ of
characteristic zero. Let $R=k[x_0,...,x_n]$ be the coordinate ring
of $\PP^n$, and denote by  $I_X$ the ideal of a closed subscheme $X \subset
\PP^n$. The Hilbert function of $X$ is then  $HF(X,d)=\dim
(R/I_X)_d$ and the Hilbert polynmial  $hp(X,-)$.

\begin{defn} \ Let $X$ be a closed subscheme of $\PP^n$.  We say that $X$ has a {\it bipolynomial Hilbert function} if
\[HF(X,d)=\min\left\{hp(\PP^n,d),hp(X,d)\right\},\]
for all $d\in\mathbb{N}$.
\end{defn}

We often find it more convenient to describe $\dim( I_X)_d$ rather than $HF(X,d)$.

\medskip

Since we will make use of Castelnuovo's inequality several times, we recall it here in a form more suited to our use (for
notation and proof we refer to \cite{AH95}, Section 2).

\begin {defn}  \label{ResiduoTraccia}
If $X, Y$ are closed subschemes of $\mathbb P^n$, we denote by $Res_Y X$
the scheme defined by the ideal $(I_X:I_Y)$ and we call it the
{\it residual scheme} of $X$ with respect to $Y$, we denote by $Tr_Y X \subset Y$ 
the schematic intersection $X\cap Y$, and call it 
the {\it trace} of $X$ on $Y$.
\end {defn}

 \begin{lem} \label{Castelnuovo}{\bf (Castelnuovo's inequality):}
Let $d,\delta \in \mathbb N$, $d \geq \delta$, let ${Y} \subseteq \PP ^n$ be a smooth hypersurface of degree $\delta$,
and let $X \subseteq \PP
^n$ be a  closed subscheme. Then
$$
\dim (I_{X, \PP^n})_d  \leq  \dim (I_{ Res_Y X, \PP^n})_{d-\delta}+
\dim (I_{Tr _{Y} X, Y})_d.
$$
\qed
\end{lem}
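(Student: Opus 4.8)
The plan is to deduce the inequality from the left-exactness of the global-sections functor applied to a single short exact sequence of ideal sheaves built out of multiplication by the defining equation of $Y$. Since $Y$ is a smooth (hence reduced and irreducible) hypersurface of degree $\delta$, its saturated homogeneous ideal is principal, $I_Y = (F)$ with $F \in R_\delta$ a non-zero-divisor. I would first record the two reinterpretations this forces: the residual ideal becomes the colon ideal $I_{Res_Y X,\PP^n} = (I_X : F)$, and the trace $Tr_Y X$ is the subscheme of $Y$ cut out by the image of $I_X$ in the homogeneous coordinate ring $S = R/(F)$ of $Y$.

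Next I would assemble the key exact sequence of sheaves on $\PP^n$,
$$0 \to \mathcal{I}_{Res_Y X}(-\delta) \xrightarrow{\,\cdot F\,} \mathcal{I}_X \to \mathcal{I}_{Tr_Y X, Y} \to 0,$$
where the right-hand term is the ideal sheaf of $Tr_Y X$ inside $Y$, viewed as a sheaf on $\PP^n$ supported along $Y$. Injectivity on the left is immediate because $F$ is a non-zero-divisor, and the inclusion $F\cdot\mathcal{I}_{Res_Y X}\subseteq\mathcal{I}_X$ is precisely the defining property $F\cdot(I_X:F)\subseteq I_X$ of the residual. The substance is the identification of the cokernel: composing $\mathcal{I}_X \hookrightarrow \mathcal{O}_{\PP^n}$ with the natural restriction $\mathcal{O}_{\PP^n} \to \mathcal{O}_Y$ yields $\mathcal{I}_{Tr_Y X, Y}$ as its image by definition of the trace, and a local computation shows its kernel $\mathcal{I}_X \cap (F)$ equals $F\cdot(\mathcal{I}_X : F)$, i.e. the image of the left-hand map.

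Twisting by $\mathcal{O}(d)$ and passing to the long exact cohomology sequence, left-exactness of $H^0$ gives
$$\dim H^0(\mathcal{I}_X(d)) \le \dim H^0(\mathcal{I}_{Res_Y X}(d-\delta)) + \dim H^0(\mathcal{I}_{Tr_Y X, Y}(d)),$$
and it remains to match the three terms with the quantities in the statement. For the two subschemes of $\PP^n$ this is the standard identity $H^0(\mathcal{I}_Z(e)) = (I_{Z,\PP^n})_e$ for the saturated ideal of any closed subscheme, together with the observation that $(I_X:F)$ is saturated whenever $I_X$ is. For the trace term, the vanishing $H^1(\PP^n,\mathcal{O}_{\PP^n}(e))=0$ for all $e$ shows that $S$ computes $\bigoplus_d H^0(Y,\mathcal{O}_Y(d))$, so $\dim H^0(Y,\mathcal{I}_{Tr_Y X, Y}(d))$ is exactly the degree-$d$ dimension of the ideal of $Tr_Y X$ in $S$, namely $\dim(I_{Tr_Y X, Y})_d$.

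The step I expect to demand the most care is the cokernel identification, that is the local verification $\mathcal{I}_X \cap (F) = F\cdot(\mathcal{I}_X : F)$ and the bookkeeping that converts global-section dimensions into the graded ideal dimensions in the statement. It is worth emphasizing that this bookkeeping is also the exact origin of the \emph{inequality} rather than an equality: the restriction map $H^0(\mathcal{I}_X(d)) \to H^0(\mathcal{I}_{Tr_Y X, Y}(d))$ need not be surjective, its cokernel injecting into $H^1(\mathcal{I}_{Res_Y X}(d-\delta))$, which is equivalent to the scheme-theoretic intersection ideal $I_X + I_Y$ possibly failing to be saturated in degree $d$.
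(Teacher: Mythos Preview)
The paper does not supply its own proof of this lemma: it is stated with a \qed, and the sentence introducing it directs the reader to \cite{AH95}, Section~2, for both notation and proof. Your argument is the standard one and is correct; it is essentially the argument one finds in the cited reference, namely the short exact sequence of ideal sheaves obtained from multiplication by the defining equation of $Y$, followed by the long exact cohomology sequence and left-exactness of $H^0$.

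Two minor remarks. First, the parenthetical ``hence reduced and irreducible'' is not needed and is not quite right in full generality (a smooth hypersurface in $\PP^1$ is a finite set of points); what you actually use, and what holds for any hypersurface, is that the saturated ideal of $Y$ is principal, generated by a non-zero-divisor $F\in R_\delta$. Smoothness plays no role in the argument. Second, your identification of $H^0(Y,\mathcal{I}_{Tr_Y X,Y}(d))$ with the degree-$d$ piece of the homogeneous ideal in $S=R/(F)$ uses that the map $S_d\to H^0(Y,\mathcal{O}_Y(d))$ is an isomorphism; you correctly derive this from $H^1(\PP^n,\mathcal{O}_{\PP^n}(e))=0$, and it is worth flagging that this is precisely where the hypothesis that $Y$ is a \emph{hypersurface} (rather than an arbitrary subvariety) enters.
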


The following lemma gives a criterion for adding to a scheme $X
\subseteq \Bbb P^ n$ a set of reduced points lying on a projective variety $V$ and imposing independent
conditions to forms of a given degree in the ideal of $X$ (see also  \cite[Lemma 2.2]{CarCatGer2}).

 \begin{lem} \label{AggiungerePuntiSuY} Let $d \in \Bbb N$ and let $X  \subseteq \Bbb P^n$ be  a closed subscheme.  Let ${Y} \subseteq \Bbb P ^n$ be a closed reduced subscheme, and 
let $P_1,\dots,P_s$ be generic distinct points on $Y$.
If $\dim (I_{X })_d =s$ and $\dim (I_{X +Y})_d =0$, then
$
\dim (I_{X+P_1+\cdots+P_s})_d = 0.$
\par
\qed
\end{lem}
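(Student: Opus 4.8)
The plan is to add the points one at a time, showing that each generic point of $Y$ drops the dimension of the linear system $(I_X)_d$ by exactly one until it reaches $0$ after $s$ steps; formally this is an induction on $s$. The atomic step is the following. Suppose $Z\subseteq \PP^n$ is a closed subscheme with $\dim(I_Z)_d\ge 1$ and $\dim(I_{Z+Y})_d=0$, and let $P\in Y$ be a generic point. Since vanishing at $P$ is a single linear condition, $(I_{Z+P})_d$ has codimension $0$ or $1$ in $(I_Z)_d$; codimension $0$ means $(I_{Z+P})_d=(I_Z)_d$, i.e. every form of $(I_Z)_d$ vanishes at $P$. The locus of $P\in Y$ for which this happens is closed, and it cannot be all of $Y$: if it were, then, $Y$ being reduced, every form of $(I_Z)_d$ would vanish on $Y$, hence lie in $I_Z\cap I_Y=I_{Z+Y}$, contradicting $(I_{Z+Y})_d=0\ne(I_Z)_d$. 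Hence for generic $P\in Y$ we get $\dim(I_{Z+P})_d=\dim(I_Z)_d-1$. Moreover the hypothesis propagates, since $I_{Z+P+Y}=I_Z\cap I_P\cap I_Y\subseteq I_Z\cap I_Y=I_{Z+Y}$ forces $\dim(I_{Z+P+Y})_d\le\dim(I_{Z+Y})_d=0$.

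Now induct on $s$. For $s=0$ there is nothing to add and the conclusion is exactly the hypothesis $\dim(I_X)_d=0$. For $s\ge 1$, apply the atomic step with $Z=X$ to obtain a generic $P_1\in Y$ with $\dim(I_{X+P_1})_d=s-1$ and $\dim(I_{X+P_1+Y})_d=0$; then apply the inductive hypothesis to $X+P_1$ together with the $s-1$ remaining generic points $P_2,\dots,P_s\in Y$ to conclude $\dim(I_{X+P_1+\cdots+P_s})_d=0$. Choosing ``$P_1$ generic, then $P_2,\dots,P_s$ generic given $P_1$'' amounts to choosing $(P_1,\dots,P_s)$ generic in $Y\times\cdots\times Y$, and when $\dim Y\ge 1$ such a tuple automatically has pairwise distinct entries, as required.

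I expect the only point needing care is the genericity bookkeeping in the atomic step, namely the assertion that a generic point of the reduced scheme $Y$ is not a zero of a given form unless that form vanishes on all of $Y$; this is transparent when $Y$ is irreducible and is the only place reducedness of $Y$ is used. Everything else is elementary linear algebra on the system $(I_X)_d$ together with the upper semicontinuity of the evaluation rank as $P$ varies over $Y$.
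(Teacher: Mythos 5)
Your proof is correct and follows essentially the same route as the paper's: an induction on $s$ whose key step shows that, because $(I_X)_d\cap(I_Y)_d=(I_{X+Y})_d=0$ and $Y$ is reduced, some (hence a generic) point of $Y$ imposes an independent condition on $(I_X)_d$, and the hypotheses pass to $X+P_1$. The paper phrases the atomic step by exhibiting a single form $f\in(I_X)_d\setminus(I_Y)_d$ and a point $P\in Y$ with $f(P)\neq0$, while you argue via the proper closed locus where all forms vanish, but these are the same argument.
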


\begin{proof}

By induction on $s$. \\
Since  $(I_{X +Y} )_d= (I_X )_d\cap (I_Y)_d=(0) $ and $\dim (I_{X })_d =s>0$,  let  $f \in (I_{X })_d$,  $f \notin (I_{Y})_d$. Therefore there exists $P \in Y$, $P \notin X$ such that $f(P) \neq 0$. It follows that 
$\dim (I_{X +P})_d = s-1$ and thus the same holds for  a generic point $P_1 \in V$.  So we are done in case $s=1$. 

Let $s>1$ and let $X' = X+P_1$. 
Obviously $\dim (I_{X ' + Y})_d =0$. Hence, by the inductive hypothesis, there exist $s-1$ generic distinct points $P_2, \dots,P_{s}$ in $Y$ such that $\dim (I_{X'+P_2+\cdots+P_{s}})_d =\dim (I_{X+P_1+\cdots+P_s})_d
= 0.$

\end{proof}

\begin {defn}\label{conica degenere}
We say that $C$ is a {\it degenerate conic} if  $C$ is the union
of two intersecting lines $L, M.$  In this case we write
$C=L+M$.
\end {defn}
  
\begin {defn}\label{3dimsundial} 
Let $L$ and $M$ be two intersecting lines in $\mathbb P^n$ ($n \geq 3$), let $P = L \cap M$, and let 
$T\simeq \Bbb P^{3}$ be a generic linear space containing the
scheme $L+M$.  We call the scheme $L+M+ 2P|_T$ a {\it degenerate conic with an embedded point} or a {\it
3-dimensional  sundial}
(see \cite {HartshorneHirschowitz}, or \cite[definition 2.6 with $m=1$]{CarCatGer2} ).

\end {defn}

\medskip

The following lemma shows that a  3-dimensional  sundial in $\mathbb P^n$ is a degeneration of two generic lines   in $\mathbb P^n$ (see  \cite[Lemma 2.5]{CarCatGer2} for the proof in a more general  case).

 \begin{lem}  \label{sundial} 
 Let $X_1  \subset \Bbb P^n $ ($n \geq 3$) be the disconnected  subscheme consisting of two skew  lines $L_1$ and $M$ (so the linear span of $X_1$ is $<X_1> \simeq \Bbb P^{3} $). Then there exists a flat family  of subschemes $$X_{\lambda}\subset <X_1> \ \ \ \ \ (\lambda \in k )$$
whose special fibre $X_0$ is the union of

\begin{itemize}
\item
the line $M $,

\item a line $L$ which intersects  $M$ in a point $P$,

\item   the scheme $2P|_ { <X_1>}$, that is, the schematic intersection of the double point
$2P$ of $\mathbb P^n$ and $<X_1>$.

\end{itemize}
Moreover, if $H \simeq  \Bbb P^{2}$ is the linear span of $L$
and  $M$, then $Res_H(X_0)$ is given by the (simple) point $P$.

 \end{lem}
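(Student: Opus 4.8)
The plan is to carry out the degeneration explicitly inside a fixed $\PP^3$, since the whole construction takes place in $\langle X_1\rangle\simeq\PP^3$ and any two skew lines are projectively equivalent to a standard pair. Work in $\PP^3$ with coordinate ring $k[x_0,x_1,x_2,x_3]$, take $M=V(x_0,x_1)$, and for $\lambda\in k$ set
\[
L_\lambda=V(x_2,\,x_1-\lambda x_3),\qquad X_\lambda=M\cup L_\lambda .
\]
For $\lambda\neq 0$ the linear forms $x_0,x_1,x_2,x_1-\lambda x_3$ are linearly independent, so $L_\lambda$ is a line skew to $M$ and $X_\lambda$ is projectively equivalent to $X_1$; for $\lambda=0$ one gets the line $L:=L_0=V(x_1,x_2)$, which meets $M$ at $P=[0:0:0:1]$, together with the plane $H:=\langle L,M\rangle=V(x_1)$.

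I would then compute the family of ideals. For $\lambda\neq 0$ the linear ideals $I_M=(x_0,x_1)$ and $I_{L_\lambda}=(x_2,x_1-\lambda x_3)$ are generated by linear forms that are jointly independent, so $I_{X_\lambda}=I_M\cap I_{L_\lambda}=I_M\cdot I_{L_\lambda}$, that is
\[
I_{X_\lambda}=\bigl(x_0x_2,\ x_1x_2,\ x_0x_1-\lambda x_0x_3,\ x_1^2-\lambda x_1x_3\bigr).
\]
Let $\mathcal{X}\subset\PP^3\times\affine^1$ be the subscheme cut out by these four equations (homogeneous of degree $2$ in $x_0,\dots,x_3$), with $\lambda$ the coordinate on $\affine^1$. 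The crucial point is that $\mathcal{X}\to\affine^1$ is flat, so that its fibre $X_0$ over $\lambda=0$ is the flat limit of the $X_\lambda$; I would obtain this by checking that the Hilbert polynomial of the special fibre $X_0=V(x_0x_2,x_1x_2,x_0x_1,x_1^2)$ is $2d+2$ — a one‑line monomial count — which is the common Hilbert polynomial of the fibres over $\lambda\neq 0$, so the Hilbert polynomial is constant over the integral base $\affine^1$. This flatness is precisely \cite[Lemma 2.5]{CarCatGer2} in a more general setting, and may alternatively just be invoked.

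It then remains to identify $X_0$ with the sundial. I would verify directly that
\[
I_{X_0}=\bigl(x_0x_2,x_1x_2,x_0x_1,x_1^2\bigr)=I_M\cap I_L\cap I_{2P|_{\PP^3}},
\]
where $I_{2P|_{\PP^3}}=(x_0,x_1,x_2)^2$ is the saturated ideal of the first infinitesimal neighbourhood of $P$ in $\PP^3$. Here $I_M\cap I_L=(x_1,\,x_0x_2)$, and since $\bigl((x_0,x_1,x_2)^2:x_1\bigr)=(x_0,x_1,x_2)$ one gets $(x_1,x_0x_2)\cap(x_0,x_1,x_2)^2=x_1(x_0,x_1,x_2)+(x_0x_2)=(x_0x_1,x_1^2,x_1x_2,x_0x_2)=I_{X_0}$. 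Thus $X_0=M+L+2P|_{\langle X_1\rangle}$ is the $3$‑dimensional sundial of Definition \ref{3dimsundial}. Finally $I_H=(x_1)$, and the same colon‑ideal computation gives $(I_{X_0}:x_1)=(x_0,x_1,x_2)$, the homogeneous ideal of the reduced point $P$, so $Res_H(X_0)=P$.

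The one genuine obstacle is the flat‑limit step: set‑theoretically $X_\lambda$ degenerates to the plane degenerate conic $L+M$, and what must be shown is that the scheme‑theoretic limit acquires \emph{exactly} the embedded point $2P|_{\langle X_1\rangle}$ — no less (which would spoil the Hilbert polynomial) and no more. Once the coordinates above are fixed, everything else reduces to the routine ideal‑theoretic computations indicated.
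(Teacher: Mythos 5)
Your proposal is correct, and it is worth pointing out that the paper itself does not prove Lemma \ref{sundial} at all: the statement is simply recorded and the reader is sent to \cite[Lemma 2.5]{CarCatGer2}, where the degeneration is carried out in a more general setting. Your argument is thus a self-contained substitute for that citation in exactly the case needed here. The reduction to the standard skew pair in $\PP^3$ is legitimate, since the whole construction takes place inside $\langle X_1\rangle$ and $\mathrm{PGL}(4)$ acts transitively on pairs of skew lines; the fibres over $\lambda\neq 0$ are the reduced unions because $I_M\cap I_{L_\lambda}=I_M\cdot I_{L_\lambda}$ for skew lines (the two sets of linear forms together span all linear forms); the monomial count giving Hilbert polynomial $2d+2$ for $V(x_0x_2,x_1x_2,x_0x_1,x_1^2)$ is right, so constancy of the Hilbert polynomial over the integral base $\affine^1$ gives flatness, and uniqueness of flat limits over a smooth one-dimensional base identifies the fibre at $0$ with the limit of the $X_\lambda$. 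The two ideal-theoretic identifications, $I_{X_0}=I_M\cap I_L\cap (x_0,x_1,x_2)^2$ (so that $X_0$ is the sundial of Definition \ref{3dimsundial}) and $(I_{X_0}:x_1)=(x_0,x_1,x_2)$ (so that $Res_H(X_0)=P$), also check out. In short, your route buys an explicit, citation-free proof of precisely the statement the paper uses, whereas the paper's reference buys the stronger, more general degeneration result at the cost of not being self-contained.
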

\qed

\begin{rem} \label{degenerare2rette}
Since  it is easy to see that  in $\mathbb P^n$ ($n \geq 3$) a  3-dimensional  sundial is also  a degeneration of two intersecting lines  and a simple generic point, by the lemma above we get that  in $\mathbb P^n$ ($n \geq 3$) 
a  degenerate conic with an embedded point can be viewed  either as a degeneration of two generic lines, or as a degeneration of a scheme which is the union of a degenerate conic and a simple generic point.

 \end{rem}
Inasmuch as we have upper semicontinuity of the Hilbert function in a flat family, we will use the remark above several times in what follows.

% The first part of the following lemma is an immediate consequence of the fact that: a single multiple point in $\mathbb P^n$ of multiplicity $m$ imposes at most ${m+n-1   \choose n}$ conditions to the forms of degree $d$; a line $d+1$ conditions; a degenerate conic  $2d+1$ conditions;  a degenerate conic with an embedded point $2d+2$ conditions. The second statement of the lemma is obvious.
%A line imposes at most $d+1$ conditions to the forms of degree $d$ in $I_X$; a degenerate conic at most $2d+1$ conditions, and a degenerate conic with an embedded point imposes at most $2d+2$ conditions, then the first part of the following lemma   is clear. The second statement of the lemma is obvious.

Now an obvious, but usefull  observation.

\begin{lem} \label{BastaProvarePers=e,e*}

Let $X= X_1+\dots + X_s \subset \mathbb P^n$ be the  union of non intersecting  closed subschemes $X_i$,  let  $s' <s$ and 
$$X'= X_1+\dots X_{s'} \subset X.$$

\begin{itemize}
\item[(i)]
If  $\dim (I_{X})_d = {d+n \choose n} - \sum_{i=1}^s HF(X_i,d) $ (the expected value), then also $ \dim (I_{X'})_d $ is as expected, that is  
$$\dim (I_{X'})_d = {d+n \choose n} - \sum_{i=1}^{s'} HF(X_i,d) .$$

\item[(ii)]  If  $ \dim (I_{X})_d = 0$, then  $ \dim (I_{X''})_d =0$, for any subscheme $X'' \supset X$ . \par
\end{itemize}
\end{lem}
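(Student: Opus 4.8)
The plan is to reduce everything to two elementary facts. First, for any closed subscheme $Z \subseteq \PP^n$ one has by definition $\dim (I_{Z})_d = {d+n\choose n} - HF(Z,d)$, so each statement about $\dim (I_{\bullet})_d$ translates into a statement about $HF(\bullet,d)$. Second, for two disjoint closed subschemes $Y, Z \subseteq \PP^n$ one has $I_{Y+Z} = I_Y \cap I_Z$, and the Mayer--Vietoris sequence $0 \to R/(I_Y\cap I_Z) \to R/I_Y \oplus R/I_Z \to R/(I_Y+I_Z) \to 0$ gives, in degree $d$, the inequality $HF(Y+Z,d) \leq HF(Y,d) + HF(Z,d)$ for every $d$. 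Since the $X_i$ are pairwise non-intersecting, iterating this yields $HF(X_{i_1}+\cdots+X_{i_k},d) \leq \sum_{j} HF(X_{i_j},d)$ for any sub-collection of the $X_i$.

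For part (i), I would set $X'' = X_{s'+1}+\cdots+X_s$, so that $X = X'+X''$ is a disjoint union. By the first fact, the hypothesis $\dim (I_{X})_d = {d+n\choose n} - \sum_{i=1}^s HF(X_i,d)$ is equivalent to $HF(X,d) = \sum_{i=1}^s HF(X_i,d)$. Then the chain
$$\sum_{i=1}^{s} HF(X_i,d) = HF(X,d) \leq HF(X',d) + HF(X'',d) \leq \sum_{i=1}^{s'} HF(X_i,d) + \sum_{i=s'+1}^{s} HF(X_i,d) = \sum_{i=1}^{s} HF(X_i,d)$$
forces every inequality to be an equality; combining the resulting identity $HF(X',d)+HF(X'',d) = \sum_{i=1}^{s'} HF(X_i,d) + \sum_{i=s'+1}^{s} HF(X_i,d)$ with the two separate bounds $HF(X',d)\leq \sum_{i=1}^{s'} HF(X_i,d)$ and $HF(X'',d)\leq \sum_{i=s'+1}^{s} HF(X_i,d)$ gives $HF(X',d) = \sum_{i=1}^{s'} HF(X_i,d)$, which is exactly the claimed value of $\dim (I_{X'})_d$ after translating back.

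For part (ii), if $X \subseteq X''$ then $I_{X''} \subseteq I_X$, hence $(I_{X''})_d \subseteq (I_X)_d = 0$, so $\dim (I_{X''})_d = 0$.

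There is essentially no serious obstacle: the lemma is purely formal. The only point worth spelling out is the subadditivity $HF(Y+Z,d) \leq HF(Y,d)+HF(Z,d)$ for disjoint subschemes, which is immediate from the exact sequence above, and the remark that the non-intersecting hypothesis is used precisely to guarantee $I_{X_1+\cdots+X_s} = \bigcap_i I_{X_i}$ and to make the iteration legitimate; no genericity or bipolynomiality is invoked.
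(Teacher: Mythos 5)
Your argument is correct: translating to Hilbert functions, using subadditivity for the (disjoint) union $X = X' + X''$, and forcing equalities in the chain is exactly the standard verification, and part (ii) is immediate from $I_{X''}\subseteq I_X$. The paper gives no proof at all for this lemma (it is recorded as an obvious observation with a \qed), so your write-up simply supplies the routine details the authors omitted.
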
 \qed

We now  recall the basic theorem of Hartshorne and Hirschowitz about the Hilbert function of generic lines.
\begin{thm} \cite[Theorem 0.1]{HartshorneHirschowitz} \label{HH}
Let $n, d \in \mathbb N$.
For $n\geq 3$, the ideal of the scheme $X\subset \Bbb P^n$   consisting of $s$ generic
 lines has the expected dimension, that is,
$$
\dim (I_X)_d = \max \left \{ {d+n \choose n} -s(d+1); 0 \right \},
$$
or equivalently
$$
HF(X,d) = \min \left \{ hp(\PP^n,d)={d+n \choose n}, hp(X,d)=s(d+1)
\right \},
$$
that is, $X$ has bipolynomial Hilbert function.
 \end{thm}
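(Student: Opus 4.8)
This is \cite[Theorem 0.1]{HartshorneHirschowitz}, so strictly we only need to cite it; but since the method of its proof---differential specialization (the ``Horace method'') together with Castelnuovo's inequality---is exactly what drives the rest of this paper, we outline how it runs. One always has the trivial lower bound $\dim(I_X)_d \geq \max\{{d+n\choose n}-s(d+1),0\}$, because a single line imposes at most $d+1$ conditions on the forms of degree $d$; so only the reverse inequality needs proof. Moreover it is enough to treat finitely many ``critical'' cases: put $e=\lfloor {d+n\choose n}/(d+1)\rfloor$. By upper semicontinuity of the Hilbert function and by Lemma \ref{BastaProvarePers=e,e*}, it suffices to show, for each pair $(n,d)$ with $n\geq3$, that (a) $e$ generic lines impose independent conditions in degree $d$, and (b) $e+1$ generic lines satisfy $\dim(I_X)_d=0$; then part (i) of Lemma \ref{BastaProvarePers=e,e*} propagates (a) to all $s\leq e$, and part (ii) propagates (b) to all $s\geq e+1$.

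The proof of (a) and (b) is a double induction on $n$ and on $d$, the inductive step being the Horace method. Fix a generic hyperplane $H\cong\PP^{n-1}$ and specialize $a$ of the $s$ lines so that they come to lie in $H$, while the remaining $s-a$ stay generic (in particular transverse to $H$); call $X_0$ the special fibre of the resulting flat family. By semicontinuity it is enough to bound $\dim(I_{X_0})_d$, and Castelnuovo's inequality (Lemma \ref{Castelnuovo}) with $\delta=1$ gives
$$\dim(I_{X_0})_d\ \leq\ \dim(I_{Res_H X_0})_{d-1}+\dim(I_{Tr_H X_0})_d .$$
Here $Res_H X_0$ is a union of $s-a$ generic lines of $\PP^n$, to which the induction on $d$ applies, while $Tr_H X_0$ is, inside $H\cong\PP^{n-1}$, the generic union of the $a$ lines lying in $H$ with the $s-a$ points the other lines cut on $H$---a generic union of lines and points in $\PP^{n-1}$, to which the induction on $n$ applies, provided the statement being proved has been enlarged to allow generic points as well as generic lines. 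One then chooses $a$ so that the two terms on the right take their smallest possible (``expected'') values and add up exactly to $\max\{{d+n\choose n}-s(d+1),0\}$, meeting the trivial lower bound; this closes the induction for (a). For (b) one arranges instead that both terms on the right vanish, and Lemma \ref{AggiungerePuntiSuY} is precisely the tool that transfers this vanishing through the point part of the trace.

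Two points resist this clean scheme and are the reason \cite{HartshorneHirschowitz} is long. First, the arithmetic of choosing $a$ almost never works on the nose: splitting $s(d+1)$ conditions between a $\PP^{n-1}$-trace in degree $d$ and a $\PP^n$-residual in degree $d-1$ typically leaves a remainder strictly between $0$ and $d+1$, and absorbing it requires finer degenerations of the line configuration---for instance letting two skew lines collide into a $3$-dimensional sundial, as in Lemma \ref{sundial} and Remark \ref{degenerare2rette}, or other partial specializations. Second, the whole induction must be anchored at $n=3$ (the case $n=2$ is genuinely different---two generic lines of $\PP^2$ already lie on a conic, so the naive count fails there), and the analysis of $\PP^3$ pushes one down to configurations of lines, points and conics in $\PP^2$, where a delicate case-by-case study is unavoidable; this $\PP^3$ analysis, together with the small-$d$ base cases, is the technical core. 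I expect the $\PP^3$ case, and the bookkeeping that keeps the two simultaneous inductions (on $n$ and on $d$) free of circularity, to be the main obstacle---everything else is the routine Castelnuovo accounting sketched above. In the present paper none of this is reproved: Theorem \ref{HH} is used exactly as stated.
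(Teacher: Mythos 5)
Your treatment matches the paper's: Theorem \ref{HH} is not reproved there but simply quoted from Hartshorne--Hirschowitz, and the reduction you invoke---the two critical cases handled via Lemma \ref{BastaProvarePers=e,e*} and Lemma \ref{AggiungerePuntiSuY}---is exactly the equivalence recorded in Remark \ref{equivalenzaEnunciati}. Your sketch of the underlying Horace/Castelnuovo induction is a faithful summary of the original argument and of the method the present paper reuses, so there is nothing to correct.
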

\qed
\medskip

To be more precise  the following equivalent  statement is the actual theorem proved in \cite{HartshorneHirschowitz}:

\begin{thm} \cite[Theorem 0.2]{HartshorneHirschowitz} \label{HH2}
Let $n, d \in \mathbb N$. Let
$$ t = \left\lfloor{d+n \choose n} \over {d+1} \right \rfloor ; \ \ \ \ r = {d+n \choose n}-t(d+1)
$$
and let $L_1, \dots, L_{t+1}$ be $t+1$ generic lines in $\Bbb P^n$.
For $n\geq 3$, the ideal of the scheme $X\subset \Bbb P^n$   consisting of the $t$ lines
$L_1, \dots, L_{t}$  and r generic points lying on $L_{t+1}$
 has the expected dimension, that is,
$$
\dim (I_X)_d = 0 .
$$
 \end{thm}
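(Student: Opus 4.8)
The plan is to deduce Theorem \ref{HH2} from Theorem \ref{HH} using Lemma \ref{AggiungerePuntiSuY}, and conversely to recover Theorem \ref{HH} from Theorem \ref{HH2} by an elementary specialization argument via Lemma \ref{BastaProvarePers=e,e*}; this exhibits the two statements as genuinely equivalent. Throughout, write $N=\binom{d+n}{n}$, so that $t=\left\lfloor N/(d+1)\right\rfloor$ and $N=t(d+1)+r$ with $0\leq r\leq d$.

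\textbf{From Theorem \ref{HH} to Theorem \ref{HH2}.} First I would apply Theorem \ref{HH} to the union $X_1=L_1+\cdots+L_t$ of the first $t$ generic lines: since $t(d+1)\leq N$ by the very definition of $t$, we get $\dim(I_{X_1})_d = N-t(d+1)=r$. Next, apply Theorem \ref{HH} to $X_1+L_{t+1}$, a union of $t+1$ generic lines: because $(t+1)(d+1)=t(d+1)+(d+1)>t(d+1)+r=N$ (here one uses $r\leq d$), we get $\dim(I_{X_1+L_{t+1}})_d=0$. Now the scheme $X$ of Theorem \ref{HH2} is $X_1+P_1+\cdots+P_r$ with $P_1,\dots,P_r$ generic distinct points on the reduced line $Y:=L_{t+1}$, and the two hypotheses of Lemma \ref{AggiungerePuntiSuY}, namely $\dim(I_{X_1})_d=r$ and $\dim(I_{X_1+Y})_d=0$, are exactly what was just verified. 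Hence Lemma \ref{AggiungerePuntiSuY} (with $s=r$) gives $\dim(I_X)_d=\dim(I_{X_1+P_1+\cdots+P_r})_d=0$, which is the assertion. When $r=0$ there are no points to add and the conclusion is already the first application of Theorem \ref{HH}.

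\textbf{From Theorem \ref{HH2} to Theorem \ref{HH}.} Assume Theorem \ref{HH2} for all $n,d$, and let $X$ be a union of $s$ generic lines in $\PP^n$, $n\geq 3$. The scheme $Z=L_1+\cdots+L_t+P_1+\cdots+P_r$ of Theorem \ref{HH2} is a union of pairwise non-intersecting subschemes (the $P_i$ lie on $L_{t+1}$, disjoint from $L_1,\dots,L_t$) with $\dim(I_Z)_d=0=N-\bigl(t(d+1)+r\bigr)$, the expected value. If $s\leq t$, then $L_1+\cdots+L_s$ is a sub-union of $Z$, so Lemma \ref{BastaProvarePers=e,e*}(i) gives $\dim(I_X)_d=N-s(d+1)$, as wanted. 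If $s\geq t+1$, then $X\supseteq L_1+\cdots+L_{t+1}\supseteq Z$ (since $\{P_1,\dots,P_r\}\subset L_{t+1}$ as a subscheme), so Lemma \ref{BastaProvarePers=e,e*}(ii) gives $\dim(I_X)_d=0=\max\{N-s(d+1),0\}$, the last equality because $s(d+1)\geq(t+1)(d+1)>N$. This is Theorem \ref{HH}.

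\textbf{Main obstacle.} There is essentially no serious obstacle: the entire mathematical weight is carried by Theorem \ref{HH}, and the passage to Theorem \ref{HH2} is bookkeeping with Lemmas \ref{AggiungerePuntiSuY} and \ref{BastaProvarePers=e,e*}. The only points deserving a line of care are the inequality $r\leq d$, which is what forces $\dim(I_{X_1+L_{t+1}})_d=0$; the verification that the hypotheses of Lemma \ref{AggiungerePuntiSuY} hold on the nose; and the degenerate case $r=0$.
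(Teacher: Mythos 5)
Your argument is correct and is essentially the paper's own treatment: Theorem \ref{HH2} is quoted from \cite{HartshorneHirschowitz} without proof, and Remark \ref{equivalenzaEnunciati} records precisely your two-way deduction --- Theorem \ref{HH} yields Theorem \ref{HH2} via Lemma \ref{AggiungerePuntiSuY}, and the reverse implication follows from Lemma \ref{BastaProvarePers=e,e*}. Keep in mind that this is only an equivalence with the cited Theorem \ref{HH}, not an independent proof of the Hartshorne--Hirschowitz result, whose real content (the induction that actually establishes Theorem \ref{HH2}) is carried out in \cite{HartshorneHirschowitz}.
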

\qed
\medskip

\begin{rem} \label{equivalenzaEnunciati} By Lemma \ref{BastaProvarePers=e,e*}, the statement of Theorem \ref {HH2} easily implies
  the one of Theorem  \ref{HH}; moreover,  by
Lemma \ref{AggiungerePuntiSuY}, it is easy to prove that also the converse holds.

\end{rem}
\medskip

%%%%%%%%%%%%%%%%%%
%%%%%%%%%%%%%%%%%%%

%In  \cite  {CarCatGer2} we laborously  proved the case  $s=d-1$ in $\mathbb P^n$, with  $n\geq 4$.

\section{The main theorem }\label{Risultati}

\begin{prop} \label{sundialsinP3}  Let 
$X\subset \mathbb P^3 $ be the union of  $s$ generic 3-dimensional  sundials and  $l$ generic lines. Then the Hilbert Function of $X$ is 
$$ HF(X,d) = \min \left\{  {d+3 \choose 3}; (d+1)(2s+l)\right \}
,$$
that is, $X$ has bipolynomial Hilbert function.
\end{prop}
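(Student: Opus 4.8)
The plan is to prove Proposition \ref{sundialsinP3} by a double induction, on the degree $d$ and on the number of schemes, using Castelnuovo's inequality (Lemma \ref{Castelnuovo}) with a well-chosen plane $H \simeq \mathbb P^2$ as the hypersurface of degree $\delta = 1$. The key structural fact is Lemma \ref{sundial} together with Remark \ref{degenerare2rette}: a $3$-dimensional sundial specializes from a pair of skew lines, and when we place its degenerate conic $L+M$ inside a plane $H$, the trace on $H$ is the whole degenerate conic $L+M$ while the residual $\operatorname{Res}_H$ is just the simple point $P$. Thus if we specialize $a$ of the $s$ sundials and $b$ of the $l$ lines so that their supporting conics and lines all lie in a single generic plane $H$, then $\operatorname{Tr}_H X$ consists of $a$ degenerate conics, $b$ lines, and (the traces of) the remaining general sundials and lines meeting $H$ in general points, while $\operatorname{Res}_H X$ consists of $a$ simple points (one per collapsed sundial), the remaining $s-a$ sundials, the remaining $l-b$ lines, plus whatever the general pieces leave behind — generically nothing from a line meeting $H$ transversally, and nothing extra from a sundial met transversally.

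First I would set up the numerology: given $d$, choose the integers $a, b$ so that the expected dimension of $X$ in degree $d$ is split as evenly as possible between the degree-$d$ part on $H \simeq \mathbb P^2$ and the degree-$(d-1)$ part on $\mathbb P^3$. Concretely one wants $\dim(I_{\operatorname{Tr}_H X, H})_d = 0$ (so that the plane part is "full") and simultaneously $\operatorname{Res}_H X$ to have the expected, and in the critical cases vanishing, postulation in degree $d-1$; Castelnuovo's inequality then forces $\dim(I_X)_d = 0$, which by Lemma \ref{BastaProvarePers=e,e*} and Lemma \ref{AggiungerePuntiSuY} (in the style of Remark \ref{equivalenzaEnunciati}) is equivalent to the bipolynomiality we want. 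The plane computation is a statement about a general union of degenerate conics, lines, and points in $\mathbb P^2$ imposing independent conditions on plane curves of degree $d$ — this is classical (it follows from the theory of fat/linear schemes in $\mathbb P^2$, e.g. from the results already available in \cite{HartshorneHirschowitz} or can be reduced to counting), and I would isolate it as a sub-lemma. The residual computation in $\mathbb P^3$ in degree $d-1$ mixes fewer sundials, fewer lines, and some simple points; adding the simple points is handled by Lemma \ref{AggiungerePuntiSuY}, and the union of fewer sundials and lines falls to the inductive hypothesis on the number of schemes.

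The base cases to check are small $d$ (say $d \le 2$, where the Hilbert function is in the "dominant" range $\binom{d+3}{3} \le (d+1)(2s+l)$ and one only needs enough sundials/lines to force $\dim(I_X)_d = 0$, which is easy since one sundial already behaves like two lines) and small $s, l$ (with $s = 0$ this is exactly Theorem \ref{HH}/\ref{HH2} for lines in $\mathbb P^3$). A delicate point in the induction is the "boundary" value of $d$ where $\binom{d+3}{3}$ crosses $(d+1)(2s+l)$: there one must arrange the split $(a,b)$ and the number of residual points so that neither the plane part nor the space part overflows, which is the usual bookkeeping headache in Hartshorne–Hirschowitz-type arguments and is where I expect to spend the most effort. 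The main obstacle, concretely, is verifying that after specializing $a$ sundials and $b$ lines into the plane $H$ the two resulting schemes — the trace in $\mathbb P^2$ and the residual in $\mathbb P^3$ — are still \emph{generic enough} in their respective ambient spaces for the sub-lemma and the inductive hypothesis to apply; this requires checking that the general points cut out on $H$ by the non-specialized sundials and lines are general points of $H$, and that the residual sundials, residual lines, and new simple points form a generic configuration in $\mathbb P^3$, which is where a careful dimension count on the parameter space of such degenerations is needed.
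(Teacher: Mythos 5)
Your general framework (specialize, apply Castelnuovo's inequality, induct) is the right one, but the specific choice of auxiliary hypersurface --- a plane $H\simeq\mathbb P^2$ inside $\mathbb P^3$ --- is precisely what cannot work here, and this is a genuine gap rather than bookkeeping. The obstruction is numerical and unavoidable. In the critical case the scheme (say $t$ even, $r$ points, so $\frac t2$ sundials) has exactly $t(d+1)+r={d+3 \choose 3}$ degrees of freedom, with $t(d+1)={d+3\choose 3}-r$ and $0\le r\le d$. For Castelnuovo to yield $\dim(I_X)_d=0$ you need simultaneously $\dim(I_{Tr_H})_d=0$ and $\dim(I_{Res_H})_{d-1}=0$, i.e.\ the trace must impose at least ${d+2\choose 2}$ conditions and the residual at least ${d+2\choose 3}$, for a total of at least ${d+3\choose 3}$: there is essentially zero slack. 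But every pair of specialized lines lying in $H$ and belonging to different components meet in a point, and each such intersection strictly lowers the total number of conditions the specialized scheme can impose (e.g.\ two sundial conics in $H$ plus their two residual points impose at most $4(d+1)-4$, not $4(d+1)$). On the other hand, to unload the residual enough that it can vanish in degree $d-1$ you must move curves of total degree on the order of $d/3$ into $H$ (each line moved into $H$ transfers only about $d$ conditions from the residual side to the trace side, and the amount to transfer is $td-{d+2\choose 3}\approx \frac{d(d+2)}3$), which costs on the order of ${d/3 \choose 2}\approx d^2/18$ conditions --- vastly more than the slack $r\le d$. A short computation along these lines shows that at most one sundial conic can be placed in $H$ without creating an unrecoverable deficiency, and with so few specialized the residual is hopelessly underloaded; no choice of $(a,b)$ makes both sides of Castelnuovo vanish. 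Your intuition is correct for $n\ge 4$, where specialized lines inside a hyperplane can remain skew --- and indeed that is exactly the strategy of the paper's Theorem \ref{sundialsinPn} --- but it breaks irreparably at $n=3$.

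The paper's proof of Proposition \ref{sundialsinP3} avoids this by taking the auxiliary surface to be a smooth quadric $Q$, in the spirit of Hartshorne--Hirschowitz: it first reduces (via Remark \ref{degenerare2rette}) to critical schemes $W$ and $T$ built from $t=\lfloor{d+3\choose 3}/(d+1)\rfloor$ sundials (plus possibly a line and $r$ points), then specializes roughly $2d/3$ sundials so that one line of each lies in a single ruling of $Q$ --- these lines stay pairwise disjoint, which is exactly what the plane forbids. The trace is computed on $Q\simeq\mathbb P^1\times\mathbb P^1$ as curves of bidegree $(d-k,d)$ through general points, and in the hardest case $d\equiv 1\pmod 3$ also through $2h$ double points, where one needs the result of \cite{CGG5} on $\mathbb P^1\times\mathbb P^1$ together with a second residuation; the residual is handled in degree $d-2$ by induction on $d$. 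There are also genuinely ad hoc base cases ($d=4$, and $d=7$, the latter settled by a special degeneration or a computer check) which no uniform specialization covers and which any correct proof must isolate.
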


\begin{proof}
Let 
 $$t=  \left\lfloor{ {d+3 \choose 3} \over {d+1} } \right\rfloor, 
  \ \ \ \ \   r= {d+3 \choose 3} - t (d+1)
\  \  \mbox{ and } \     s = \left\lfloor{ \frac t 2 } \right\rfloor .$$
By Remark \ref{degenerare2rette}, it sufficies to prove that  the following schemes
have the expected Hilbert Function in degree $d$:
$$W =
   \left \{
  \begin{matrix} 
  \widehat C_1 + \dots +\widehat  C_{  s }  +P_1+\dots +P_r  & {\hbox  {for } } \  t \    {\hbox  {even } }\\
   \widehat C_1 + \dots +\widehat C_{ s }  +M +P_1+\dots P_r  & {\hbox  {for } } \ t \    {\hbox  {odd } }
   \end{matrix}
    \right. ,
     $$
     $$T =
   \left \{
  \begin{matrix} 
  \widehat C_1 + \dots +\widehat  C_{   s }  +M & {\hbox  {for } } \  t \    {\hbox  {even and  }  r>0}\\
   \widehat C_1 + \dots +\widehat C_{ s+1}   & {\hbox  {for } } \ t \    {\hbox  {odd  and } r>0}
   \end{matrix}
    \right. ,
     $$
 where the  $\widehat C_i$ are degenerate conics with an embedded point, that is 3-dimensional sundials, the $P_i$ are generic points and $M$ is a generic line. In other words
$$\dim (I_W)_d =exp\dim (I_W)_d =    {d+3 \choose 3} - t (d+1)-r=0;       
$$
$$\dim (I_T)_d=exp\dim (I_W)_d = \max \left \{   {d+3 \choose 3} - (t +1)(d+1); 0 \right \}=0.$$

Notice that $W$ and $T$, as defined above, each have linear span which is all of $\mathbb P^3$. Thus the lemma is clear for $d=1$.

   We fix the following notation. 
   
    Set  $\widehat C_i= C_i +2R_i$: where $C_i= L_{i,1}+L_{i,2} $ is  the union of the
    two intersecting lines $L_{i,1},L_{i,2}$, and where $2R_i$ is  a double point   with support at $L_{i,1}\cap L_{i,2}$.
\\

  We proceed by induction on $d$.  We omit the easy proof in the cases   $d=2, 3$. 

For $d=4$, we have  $t=7$ and $r=0$, so the scheme $W$ consists of three degenerate conics with embedded points (that is, three $3$-dimensional  sundials) and a line $M$.  
Let $H_i$ be the plane containing the support of $\widehat C_i$. 
By specializing the singular point of $\widehat  C_1$ on $H_2$,  the surfaces of degree $4$ though the $\widehat C_i$ and the line $M$ have the three  $H_i$ as fixed components, and the conclusion easily follows.\\
Now assume $d \geq 5$.
Let $Q$ be a smooth quadric surface.
We consider three cases.
\\
 \par
 {\it Case 1}: $d \equiv 0$ mod 3.  \\
 Let $d = 3h$. We have:
 $$t=   \frac {(h+1)(3h+2)}{ 2},  \ \ \ \ \   r=0 .$$
Note that $\left\lfloor{ \frac t 2 } \right\rfloor\geq 2h+1$, so we can let 
 $\widetilde W$ be the scheme obtained from $W$ by  specializing $2h+1$   sundials  $\widehat C_i$ in such a way that the lines $L_{1,1},\dots ,L_{{2h+1},1}$ become  lines of the same ruling on $Q$, (the lines $L_{1,2},\dots ,L_{{2h+1},2}$ remain generic lines,  not lying on $Q$).  \\
  We have
 $$
 Res_Q  {\widetilde W} =  \left \{
  \begin{matrix} 
L_{1,2}+\dots + L_{{2h+1},2} + \widehat C_{2h+2} + \dots +\widehat  C_{   \frac t 2 } & {\hbox  {for } } \  t \    {\hbox  {even } }\\
L_{1,2}+\dots + L_{{2h+1},2} + \widehat C_{2h+2} + \dots +\widehat  C_{   \frac {t-1} 2 }+M & {\hbox  {for } } \ t \    {\hbox  {odd } }
   \end{matrix}
    \right. .
 $$
 By the inductive hypothesis we have:
 $$\dim (I_{Res_Q  {\widetilde W}} )_{d-2} =  {3h+1 \choose 3}- (3h-1)(2h+1+t-4h-2)=0.
 $$
 
 Now we  consider 
 $Tr_Q  {\widetilde W} $. It  consists of $2h+1$ lines of the same ruling and $p$ generic points,
 where we must determine $p$.

 Since 
 $L_{i,2}$ meets $Q$ in two points (one of which is $L_{i,1} \cap L_{i,2}$), and each $\widehat C_i$
 meets $Q$ in four points ($i \geq 2h+2$), and $M$ meets $Q$ in two points, it is easy to compute that,
 both for $t$ even and $t$ odd, $$p=2(2h+1+t-4h-2) = 3h^2+h.$$
Thinking of $Q$ as $\mathbb P^1 \times \mathbb P^1$, we see that the forms  of degree $d$ in the ideal of 
$Tr_Q  {\widetilde W} $ are  curves of type $(3h-(2h+1), 3h)=(h-1,3h)$ in $\mathbb P^1 \times \mathbb P^1$ passing through
$p$ generic points. Hence
$$\dim ( I_{Tr_Q  {\widetilde W}})_{d} =h(3h+1)-p=0
.$$
So by Lemma \ref {Castelnuovo} and by the semicontinuity of the Hilbert function we get $ \dim (I_{W})_d=   0.$
\\
\par
 {\it Case 2}: $d \equiv 2$ mod 3.  \\
 Let $d = 3h+2$. We have:
 $$t=   \frac {3(h+1)(h+2)}{ 2},  \ \ \ \ \   r=h+1 .$$
 Recall that $ s = \left\lfloor{ \frac t 2 } \right\rfloor$ and so
 $$W =
   \left \{
  \begin{matrix} 
  \widehat C_1 + \dots +\widehat  C_{  s }  +P_1+\dots +P_r  & {\hbox  {for } } \  t \    {\hbox  {even } }\\
   \widehat C_1 + \dots +\widehat C_{ s }  +M +P_1+\dots P_r  & {\hbox  {for } } \ t \    {\hbox  {odd } }
   \end{matrix}
    \right. ,
     $$
     $$T =
   \left \{
  \begin{matrix} 
  \widehat C_1 + \dots +\widehat  C_{   s }  +M & {\hbox  {for } } \  t \    {\hbox  {even and  }  r>0}\\
   \widehat C_1 + \dots +\widehat C_{ s+1}   & {\hbox  {for } } \ t \    {\hbox  {odd  and } r>0}
   \end{matrix}
    \right. ,
     $$
     
Note that   $s\geq 2h+2$ and so we can let
$\widetilde W$ be the scheme obtained from $W$ by  specializing the $r$ points $P_i$ on $Q$ and by  specializing $2h+2$  sundials $\widehat C_i$ in such a way that the
 lines $L_{1,1},\dots ,L_{{2h+2},1}$ become  lines of the same ruling on $Q$. 
 
 The specialization for $T$ proceeds in a slightly different way. First notice that 
  for $h>1$ we have  $s \geq 2h+3$, while for $h=1$ we have $d=5$, so $t=9$ is odd. 
 Thus we can  let $\widetilde T$ be the scheme obtained from $T$ by   specializing $2h+3$  sundials $\widehat C_i$ in such a way that the
 lines $L_{1,1},\dots ,L_{{2h+3},1}$ become  lines of the same ruling on $Q$.
 
  We have
 $$
 Res_Q  {\widetilde W} =  \left \{
  \begin{matrix} 
L_{1,2}+\dots + L_{{2h+2},2} + \widehat C_{2h+3} + \dots +\widehat  C_{   \frac t 2 } & {\hbox  {for } } \  t \    {\hbox  {even } }\\
L_{1,2}+\dots + L_{{2h+2},2} + \widehat C_{2h+3} + \dots +\widehat  C_{   \frac {t-1} 2 }+M & {\hbox  {for } } \ t \    {\hbox  {odd } }
   \end{matrix}
    \right. .
 $$
 
 $$
 Res_Q  {\widetilde T} =  \left \{
  \begin{matrix} 
L_{1,2}+\dots + L_{{2h+3},2} + \widehat C_{2h+4} + \dots +\widehat  C_{ s } +M& {\hbox  {for } } \  t \    {\hbox  {even } }\\
L_{1,2}+\dots + L_{{2h+3},2} + \widehat C_{2h+4} + \dots +\widehat  C_{  s+1 } & {\hbox  {for } } \ t \    {\hbox  {odd } }
   \end{matrix}
    \right. .
 $$

 and by the inductive hypothesis we easily get
 $$\dim (I_{Res_Q  {\widetilde W}} )_{d-2} =\dim (I_{Res_Q  {\widetilde T}} )_{d-2}  =  
 {3h+3 \choose 3}- (3h+1)(t-2h-2)=0.
 $$
The trace 
 $Tr_Q  {\widetilde W} $ consists of $2h+2$ lines of the same ruling and $p$ generic points, 
 where $p$ has to be determined. Since the
 $L_{i,2}$ and $M$  each meet $Q$ in two points  and each $\widehat C_i$
 meets $Q$ in four points, it is easy to compute that 
 $$p=3h^2+5h+2 +r = 3h^2+6h+3.
 $$
Thinking of $Q$ as $\mathbb P^1 \times \mathbb P^1$, the forms  of degree $d$ in the ideal of 
$Tr_Q  {\widetilde W} $ are  curves of type $(3h+2-(2h+2), 3h+2)=(h,3h+2)$ in $\mathbb P^1 \times \mathbb P^1$ passing through
$p$ generic points. Hence
$$\dim ( I_{Tr_Q  {\widetilde W}})_{d} =(h+1)(3h+3)-p=0
.$$

The trace 
 $Tr_Q  {\widetilde T} $ consists of $2h+3$ lines of the same ruling and 
 $$2(t-2h-2)=3h^2+5h+2$$ generic points. 
 Thinking of $Q$ as above, the forms  of degree $d$ in the ideal of 
$Tr_Q  {\widetilde T} $ are  curves of type $(3h+2-(2h+3), 3h+2)=(h-1,3h+2)$ in $\mathbb P^1 \times \mathbb P^1$ passing through
$3h^2+5h+2$ generic points. Hence
$$\dim ( I_{Tr_Q  {\widetilde T}})_{d} =\max \{h(3h+3)-(3h^2+5h+2); 0\}=0
.$$

Hence by Lemma \ref {Castelnuovo} we get $ \dim (I_{\widetilde W})_d=   \dim (I_{\widetilde T})_d=0.$
By the  semicontinuity of the Hilbert function we get the conclusion.
\\

 {\it Case 3}: $d \equiv 1$ mod 3.  \\
 Although in this case we have $r=0$, and so we only  have to deal with $W$,  this is the most difficult case. 
 
 If we write $d = 3h+1$, we have:
 $$t=   \frac {(h+1)(3h+4)}{ 2},  \ \ \ \ \   r=0 .$$

For $h=2$, that is for $d=7$,  
we have
$$
 W= \widehat C_1 + \dots +\widehat C_7  +M .
$$
In this case a direct computation by   \cite{cocoa} gives the conclusion. 

It is also possible to get the conclusion by an ad hoc specialization:
degenerate the line $M$ onto the plane containing the support of $ \widehat C_1  $,  say  $H$, so that the new scheme $ \widehat C_1 +M$ is  the union of three intersecting lines and three double points. Then specialize onto $H$ the singular point of each  of $ \widehat C_2, \widehat C_3$ and $ \widehat C_4$.  Now $H$ is a fixed component for the surfaces of degree $7$ passing through the specialized scheme. Removing that component, we are left with: three generic sundials; three degenerate conics with only their singular point on the plane $H$ and three generic points on $H$. 
By specializing the three generic points  on $H$ to the singular points of the three degenerate conics, we obtain six generic sundials. From Case 1 we are done.

Now assume $h>2$. 
 Note that   for $h>2$ we get  $\left\lfloor{ \frac t 2 } \right\rfloor\geq 4h+1$, so we can let
$\widetilde W$ be the scheme obtained from $W$ by  specializing $2h+1$   sundials $\widehat C_{1}, \dots, \widehat C_{2h+1}$ in such a way that: the
 lines $L_{1,1},\dots ,L_{{2h+1},1}$ become  lines of the same ruling on $Q$;
 the lines $L_{1,2},\dots ,L_{{2h+1},2}$ remain generic lines,  not lying on $Q$;
 the $2h$ points $R_{2h+2}, \dots, R_{4h+1}$ are placed onto $Q$
 (recall that  $R_i$ is the support of the embedded point of $\widehat C_i$).
\\
 
  We have
  \\
  $\bullet$ for $t$ even:
   $$
 Res_Q  {\widetilde W} = 
L_{1,2}+\dots + L_{{2h+1},2} 
+ C_{2h+2} + \dots +  C_{4h+1   } +
\widehat C_{4h+2} + \dots +\widehat  C_{   \frac t 2 } ;
 $$
  $\bullet$ for $t$ odd:
   $$
 Res_Q  {\widetilde W} = 
L_{1,2}+\dots + L_{{2h+1},2} 
+ C_{2h+2} + \dots +  C_{4h+1   } +
\widehat C_{4h+2} + \dots +\widehat  C_{   \frac {t-1} 2 }+M.
  $$
  
 The trace 
 $Tr_Q  {\widetilde W} $ consists of $2h+1$ lines of the same ruling, $2h$ double points  and $p$ generic points, where (as in the previous cases) it is easy to compute that 
 $$p=3h^2-h+2.
 $$
Again thinking of $Q$ as $\mathbb P^1 \times \mathbb P^1$,  the forms  of degree $d$ in the ideal of 
$Tr_Q  {\widetilde W} $ are  curves of type $(h, 3h+1)$ in $\mathbb P^1 \times \mathbb P^1$ passing through
$p$ generic points and $2h$ double points. Hence by {\cite [Theorem 2.1]{CGG5} } we get
$$\dim ( I_{Tr_Q  {\widetilde W}})_{3h+1} =(h+1)(3h+2)-p-6h=0
.$$

Now we have to compute the dimension of $ I_{Res_Q  {\widetilde W}}$ in degree $3h-1$. \\
In order to do that, we specialize $Res_Q  {\widetilde W}$ further into a scheme  ${\widetilde{\widetilde W}}$
in this way:
 specialize the following $2h+1$ lines 
$$L_{1,2},L_{2h+2,1},\dots ,L_{{4h+1},1}$$   so that these lines become lines of the same ruling on $Q$. By recalling  that the singular points of the degenerate conics $C_{2h+2}, \dots , C_{4h+1   } $ lie on $Q$, 
we have
  \\
  $\bullet$ for $t$ even:
   $$
 Res_Q  {\widetilde{\widetilde W}} = 
L_{2,2}+\dots + L_{{2h+1},2} 
+ L_{{2h+2},2}  + \dots + L_{{4h+1},2} +
\widehat C_{4h+2} + \dots +\widehat  C_{   \frac t 2 } ;
 $$
  $\bullet$ for $t$ odd:
   $$
 Res_Q {\widetilde{\widetilde W}}= 
L_{2,2}+\dots + L_{{2h+1},2} 
+  L_{{2h+2},2}  + \dots + L_{{4h+1},2}  +
\widehat C_{4h+2} + \dots +\widehat  C_{   \frac {t-1} 2 }+M.
  $$
 By the inductive hypothesis we have:
 $$\dim (I_{Res_Q  {\widetilde{\widetilde W} } } )_{d-4} =  {3h \choose 3}- (3h-2)(t-4h-2)=0.
 $$
The trace
 $Tr_Q  {\widetilde{\widetilde W} } $ is the union  of $2h+1$ lines of the same ruling and $2(t-5h-2)$ generic points. \\
 As usual, thinking of $Q$ as $\mathbb P^1 \times \mathbb P^1$, we have that the forms  of degree $d-2$ in the ideal of 
$Tr_Q  {\widetilde{\widetilde W}} $ are  curves of type $(3h-1-(2h+1), 3h-1)=(h-2,3h-1)$ in $\mathbb P^1 \times \mathbb P^1$ passing through $2(t-5h-2)$
 generic points. Hence
$$\dim ( I_{Tr_Q  {\widetilde{\widetilde W}}})_{d-2} =(h-1)(3h)-2(t-5h-2)=0
.$$
So by Lemma \ref {Castelnuovo} we get $ \dim (I_{Res_Q  {\widetilde W}})_{d-2}=   0,$
and so by Lemma \ref {Castelnuovo} again we get  $ \dim (I_{\widetilde W})_{d}=   0$, hence $ \dim (I_{W})_{d}=   0$,
and that  finishes the proof .
\\
\par
\end{proof}

%%%%%%%%%%%%%%%%%%%%%%%%%%%%%%%%%
%%%%%%%%%%%%%%%%%%%%%%%%%%%%%%%%%%%

\begin{thm} \label{sundialsinPn}  Let $n \geq 3$ and let
$X\subset \mathbb P^n $ be the union of  $s$ generic 3-dimensional  sundials and  $l$ generic lines. Then the Hilbert Function of $X$ is 
$$ HF(X,d) = \min \left\{  {d+n \choose n}; (d+1)(2s+l)\right \}
,$$
that is, $X$ has bipolynomial Hilbert function.
\end{thm}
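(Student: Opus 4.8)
The plan is to argue by induction on $n$, the base case $n=3$ being exactly Proposition \ref{sundialsinP3}. The inequality $\dim(I_X)_d\geq {d+n\choose n}-(2s+l)(d+1)$ that the theorem also needs is automatic: by Remark \ref{degenerare2rette} a union of $s$ sundials and $l$ lines is a flat degeneration of a generic union of $2s+l$ lines, so Theorem \ref{HH} and semicontinuity of $\dim(I_{\,\cdot\,})_d$ give it at once. Combining this with Lemma \ref{BastaProvarePers=e,e*}(i) and Lemma \ref{AggiungerePuntiSuY} (adding generic points), it suffices to prove, for each $d$, the statement
$$
S_d:\qquad \dim(I_X)_d=0 \ \text{ whenever }\ X=(s\text{ generic sundials})+(l\text{ generic lines})+(c\text{ generic points}),\ (2s+l)(d+1)+c\geq{d+n\choose n}.
$$
Arguing as in the proof of Proposition \ref{sundialsinP3} (Remark \ref{degenerare2rette} and Lemma \ref{BastaProvarePers=e,e*}(ii)), $S_d$ reduces in turn to the vanishings $\dim(I_W)_d=0=\dim(I_T)_d$ for the two critical schemes: $W$, consisting of $\lfloor t/2\rfloor$ generic sundials, $r$ generic points, and — when $t$ is odd — one more generic line, and its weight-$(t{+}1)$ variant $T$ with no points, where $t=\lfloor{d+n\choose n}/(d+1)\rfloor$ and $r={d+n\choose n}-t(d+1)$.

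Fix $n\geq 4$ and a hyperplane $H\simeq\PP^{n-1}$; since $n-1\geq 3$, a $3$-dimensional sundial fits inside $H$. We prove $S_d$ by a secondary induction on $d$, treating the finitely many small values directly (for $d=1$ it is linear algebra on the span of a generic union of linear spaces), so we may assume $d$ is large. Specialise the critical scheme (say $W$) by letting $a_1$ of its sundials, $b_1$ of its lines and $r_H$ of its points lie in $H$, and leaving the remaining $a_2$ sundials, $b_2$ lines and $r-r_H$ points generic in $\PP^n$; call the result $\widetilde W$. No associated component of a generic sundial or line lies in $H$ — such a line meets $H$ transversally in one point and the embedded point of a sundial has support off $H$ — so $Res_H$ leaves these unchanged, whereas an in-$H$ sundial, line or point is contained in $H$ and hence passes entirely into the trace. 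Therefore
$$
Res_H\widetilde W=(a_2\text{ generic sundials})+(b_2\text{ generic lines})+(r-r_H\text{ generic points})\subset\PP^n,
$$
while $Tr_H\widetilde W$ is, inside $H$, the union of the $a_1$ specialised sundials and $b_1$ specialised lines together with $2a_2+b_2+r_H$ extra points which — choosing the two lines of each generic sundial through generic points of $H$ — may be taken jointly generic. By Castelnuovo's inequality (Lemma \ref{Castelnuovo} with $\delta=1$) and semicontinuity,
$$
\dim(I_W)_d\ \leq\ \dim(I_{Res_H\widetilde W})_{d-1}+\dim(I_{Tr_H\widetilde W})_{d}.
$$

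The secondary inductive hypothesis $S_{d-1}$ applied to $Res_H\widetilde W$ makes the first term vanish as soon as $(2a_2+b_2)\,d+(r-r_H)\geq {d+n-1\choose n}$, and the main inductive hypothesis in $\PP^{n-1}$ — with Lemma \ref{AggiungerePuntiSuY} absorbing the $2a_2+b_2+r_H$ trace points — makes the second vanish as soon as $(2a_1+b_1)(d+1)+2a_2+b_2+r_H\geq {d+n-1\choose n-1}$. The left–hand sides of these two inequalities add up to $(2a_1+b_1+2a_2+b_2)(d+1)+r=t(d+1)+r={d+n\choose n}$, which by Pascal's identity ${d+n\choose n}={d+n-1\choose n-1}+{d+n-1\choose n}$ is exactly the sum of their right–hand sides; hence both must hold with equality, pinning down the required $2a_1+b_1$ and $r_H$ (for the variant $T$ the analogous inequalities have room to spare). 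The main obstacle is this last bookkeeping: one has to check that the forced value of $2a_1+b_1$ is attainable with at most $\lfloor t/2\rfloor$ sundials and the single allowed extra line (a parity constraint — this is why both shapes $W$ and $T$ are needed), that $0\leq r_H\leq r$, and that $Res_H\widetilde W$ indeed lies in the range to which $S_{d-1}$ applies. Carrying this out is a finite case analysis parallel to the one in the proof of Proposition \ref{sundialsinP3}, using also the sundial/pair-of-lines flexibility of Remark \ref{degenerare2rette}, together with the direct verification of the finitely many small $d$ for which the sundials fail to span $\PP^n$.
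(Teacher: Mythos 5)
Your overall frame --- double induction on $n$ and $d$, reduction to the two critical schemes $W$ and $T$, specialization into a generic hyperplane $H$, Castelnuovo's inequality with the residual handled in degree $d-1$ in $\PP^n$ and the trace in degree $d$ in $H\simeq\PP^{n-1}$ --- is exactly the paper's strategy. The genuine gap is in the bookkeeping you defer at the end: with the only specialization moves you allow (each sundial, line or point is either placed entirely inside $H$ or left fully generic), the two forced equalities cannot be met in general. As you observe, since the total number of conditions is exactly ${d+n\choose n}$, both the residual and trace inequalities must hold with equality; the residual one reads $(2a_2+b_2)d+(r-r_H)={d+n-1\choose n}$, i.e.\ $r-r_H=r'$ with $r'={d+n-1\choose n}-t'd$. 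But there is no reason that $r\geq r'$: for instance for $n=4$, $d=4$ one has $r=0$ while $r'=3$ (here $d$ does not even divide ${d+n-1\choose n}=35$), so no choice of $a_2,b_2,r_H$ of your allowed type can work; likewise the parity of $t-t'$ can clash with the at most one line present in $W$ or $T$, a problem you flag but do not resolve. So the ``finite case analysis'' you appeal to is not a routine check parallel to Proposition \ref{sundialsinP3}: it must hold for all $d$ and $n$, and it forces specializations outside your list.

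The paper closes exactly this gap with finer degenerations of sundials. For $r'$ of them only the two lines $L_{i,1}+L_{i,2}$ are put into $H$ while the embedded point $2R_i|_{H_i}$ is kept out, so that $Res_H$ of such a sundial is the single point $R_i$: this manufactures the $r'$ residual points that $W$ does not possess. The price is twofold and accounts for most of the argument: (i) those residual points lie on $H$, hence are not generic in $\PP^n$, and one needs Lemma \ref{AggiungerePuntiSuY} together with the estimate of Lemma \ref{disuguaglianza1} to absorb them; (ii) the traces of these sundials are degenerate conics without their embedded point, imposing one condition fewer than a sundial, and they are restored to sundials via Remark \ref{degenerare2rette} by consuming generic trace points, which is why the inequalities $s-s'-r'\geq 0$, $r+t'\geq r'$, $2s'\geq r'$ of Lemma \ref{ss'rr'} are needed. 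The parity mismatch (Cases a and c of the paper) is handled by yet another move absent from your list --- specializing only one line of a sundial into $H$ --- and in Case c this leaves a line with an embedded point in the trace, which the paper converts into a sundial inside $H$ by specializing $d$ of the trace points onto a line through $R_{s-s'}$. Without these extra specializations (or a substitute for them) your induction cannot close.
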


\begin{proof}
Let 
 $$t=  \left\lfloor{ {d+n \choose n} \over {d+1} } \right\rfloor, 
  \ \ \ \ \   r= {d+n \choose n} - t (d+1) \ \  \mbox{ and } \ \  
    s = \left\lfloor{ \frac t 2 } \right\rfloor .$$
As in the case $n=3$, by Remark \ref{degenerare2rette}, it sufficies to prove that  the following schemes
have the expected Hilbert Function in degree $d$:
$$W =
   \left \{
  \begin{matrix} 
  \widehat C_1 + \dots +\widehat  C_{  s }  +P_1+\dots +P_r  & {\hbox  {for } } \  t \    {\hbox  {even } }\\
   \widehat C_1 + \dots +\widehat C_{ s }  +M +P_1+\dots P_r  & {\hbox  {for } } \ t \    {\hbox  {odd } }
   \end{matrix}
    \right. ,
     $$
     $$T =
   \left \{
  \begin{matrix} 
  \widehat C_1 + \dots +\widehat  C_{   s }  +M & {\hbox  {for } } \  t \    {\hbox  {even and  }  r>0}\\
   \widehat C_1 + \dots +\widehat C_{ s+1}   & {\hbox  {for } } \ t \    {\hbox  {odd  and } r>0}
   \end{matrix}
    \right. ,
     $$
 where the  $\widehat C_i$ are degenerate conics with an embedded point, that is 3-dimensional sundials, the $P_i$ are generic points and $M$ is a generic line. In other words
$$\dim (I_W)_d =exp\dim (I_W)_d =    {d+n \choose n} - t (d+1)-r=0;       
$$
$$\dim (I_T)_d=exp\dim (I_W)_d = \max \left \{   {d+n \choose n} - (t +1)(d+1); 0 \right \}=0.$$

We will again be using Castelnuovo's  inequality (see Lemma \ref{Castelnuovo}) and specialization, but this time our specializations will all be into
 hyperplanes. For this reason we must also keep track of the following integers:

$$t'=  \left\lfloor{ {d-1+n \choose n} \over {d} } \right\rfloor, 
  \ \ \ \ \   r'= {d-1+n \choose n} - t'd     \ \  \mbox{ and } \ \ 
  s' = \left\lfloor{ \frac {t'} 2 } \right\rfloor .$$
We denote the sundial $\widehat C_i$ by 
   $$\widehat C_i= C_i +2R_i|_{H_i}$$
   where $C_i= L_{i,1}+L_{i,2} $ is  the union of the
    two intersecting lines $L_{i,1},L_{i,2}$;  where   $H_i \simeq \mathbb P^3$ is a   generic linear space containing 
     $L_{i,1}$ and $ L_{i,2}$; and where $2R_i|_{H_i}$ is  a double point  in $H_i$ with support at $L_{i,1}\cap L_{i,2}$.

We proceed by induction on $n+d$.  In Proposition \ref{sundialsinP3} we have proved the case $n=3$ and,
since $W$ and $T$ each have linear span which is all of $\mathbb P^n$,  the theorem is clear for $d=1$. 

So assume $n >3$, $d>1$.

We split the proof into three cases.

\medskip
{\it Case a):} $t$ and $t'$ both odd.\\

In this case we have
$$W =
   \widehat C_1 + \dots +\widehat C_{ s }  +M +P_1+\dots P_r  \ ,     $$
     $$T =
   \widehat C_1 + \dots +\widehat C_{ s+1}      . $$
   
 Notice that $s-s'-r' \geq 0$ (this inequality is treated in the Appendix,  Lemma \ref{ss'rr'}(a)). 
 
 Using this inequality we can construct
$\widetilde W$, a specialization of $W$,
as follows.
Pick a generic hyperplane $H$:
\\
$\bullet$   specialize the first $r'$ sundials,  $\widehat C_1, \dots , \widehat C_{r'}$, in such a way that $L_{i,1}+L_{i,2}\subset H$, but   $2R_i|_{H_i} \not\subset H $, for $1 \leq i \leq r'$; \\
$\bullet$  specialize the next   $s - s' - r'$  sundials, $\widehat C_{r'+1}$,  $\dots, \widehat C_{s-s'}$,  into $H$ ; \\
$\bullet$  specialize the points $P_1, \dots, P_r$  into $H$; \\
$\bullet$ leave the remaining sundials and the line generic.

Similarly, we specialize $T$ to  
 $\widetilde T$ by: \\
 $\bullet$ specializing the first   $r'$ sundials,  $\widehat C_1, \dots , \widehat C_{r'}$, in such a way that $L_{i,1}+L_{i,2}\subset H$, but  $2R_i|_{H_i} \not\subset H $; \\
 $\bullet$  specializing  the next  $s - s' - r'$  sundials,   $\widehat C_{r'+1}$,  $\dots, \widehat C_{s-s'}$, into $H$; \\
  $\bullet$  specializing  the  sundial $\widehat C_{s+1}$  in such a way that $L_{s+1,1} \subset H$, but  $L_{s+1,2} \not\subset H $.

 We will prove that 
  $$\dim (I_{Res_H  {\widetilde W} })_{d-1} = \dim  (I_{Tr_H  {\widetilde W} })_{d}=\dim  (I_{Res_H  {\widetilde T} })_{d-1}=
 \dim   (I_{Tr_H  {\widetilde T} })_{d}=0.$$

  We have:
$$
 Res_H  {\widetilde W} = 
   R_1 + \dots + R_{ r' }   + \widehat C_{s-s'+1} + \dots +\widehat C_{ s } +M ;
$$
$$
 Res_H  {\widetilde T} =  R_1 + \dots + R_{ r' }   + \widehat C_{s-s'+1} + \dots +\widehat C_{ s } +L_{s+1,2};
$$
$$
 Tr_H  {\widetilde W} = 
 C_1 + \dots + C_{ r' }   + \widehat C_{r'+1} + \dots +\widehat C_{ s-s' } +P_1+\dots P_r +  $$$$
 + Tr_H ( \widehat C_{s-s'+1} + \dots +\widehat C_{ s } +M);
$$
$$
 Tr_H  {\widetilde T} =  C_1 + \dots + C_{ r' }   + \widehat C_{r'+1} + \dots +\widehat C_{ s-s' } +L_{s+1,1}  $$
 $$
 + Tr_H ( 2R_{s+1}|_{H_{s+1}} + \widehat C_{s-s'+1} + \dots +\widehat C_{ s } ).
$$
So we have that $Res_H  {\widetilde W} $ and $Res_H  {\widetilde T} $ are the union of 
$s'$ generic sundials, a generic  line and the $r'$ points $ R_1, \dots , R_{ r' } $.  These $r'$ points 
lie on $H$, and are  generic points  on $H \simeq \mathbb P^{n-1}$, but  since $r' \leq d-1$ and $n\geq 4$, for $d \leq 5$ these points are generic also on $\mathbb P^n$. So  for $d \leq 5$ by the inductive hypothesis we immediately get 
$$\dim (I_{Res_H  {\widetilde W} })_{d-1} =\dim (I_{Res_H  {\widetilde T} })_{d-1}=0.
$$
For $d>5$, consider $Res_H  {\widetilde W } -(R_1 + \dots + R_{ r' }) $ and 
$Res_H  {\widetilde T } -(R_1 + \dots + R_{ r' })$.
These schemes are the union of 
$s'$ generic sundials and a generic  line, hence by the inductive hypothesis we have
$$\dim (I_{Res_H  {\widetilde W} -(R_1 + \dots + R_{ r' })})_{d-1} =\dim (I_{Res_H  {\widetilde T} -(R_1 + \dots + R_{ r' })})_{d-1}=r'.
$$
Moreover 
$$\dim (I_{Res_H  {\widetilde W} -(R_1 + \dots + R_{ r' })+H })_{d-1} =
\dim (I_{Res_H  {\widetilde T} -(R_1 + \dots + R_{ r' })+H })_{d-1} 
$$
$$=
\dim (I_{ Res_H  {\widetilde W} -(R_1 + \dots + R_{ r' }) }      )_{d-2}=
\dim (I_{ Res_H  {\widetilde T} -(R_1 + \dots + R_{ r' }) }      )_{d-2}
$$
$$=\max \left \{
{d-2+n \choose n} - (2s'+1)(d-1) ; 0 \right \}=$$
$$=\max \left \{
{d-2+n \choose n} - t'(d-1) ; 0 \right \}= 0,$$
 (the last equality is proved in the Appendix, Lemma \ref{disuguaglianza1} ). 
 
Hence, by
 Lemma \ref{AggiungerePuntiSuY} (with $Y=H$ )  we get 
 $$\dim (I_{Res_H  {\widetilde W} })_{d-1} =\dim (I_{Res_H  {\widetilde T} })_{d-1}=0.
$$
 
 Now we compute $\dim (I_{Tr_H  {\widetilde W} })_{d} $ and $\dim (I_{Tr_H  {\widetilde T} })_{d} $.
 
  Recall that
  $$
 Tr_H  {\widetilde W} = 
 C_1 + \dots + C_{ r' }   + \widehat C_{r'+1} + \dots +\widehat C_{ s-s' } +P_1+\dots P_r +  $$$$
 + Tr_H ( \widehat C_{s-s'+1} + \dots +\widehat C_{ s } +M);
$$
 so the trace $Tr_H  {\widetilde W} $ is the union of  $r'$ degenerate conics, $s-s'-r'$ sundials and 
 $r+2s'+1=r+t'$ generic points. Since $r+t' \geq r'$ (see the Appendix, Lemma \ref{ss'rr'}(b)), by Remark \ref{degenerare2rette}, the dimension, in degree $d$, of $Tr_H  {\widetilde W} $ is not more than the dimension, in   degree $d$, of a scheme which is the  union of $s-s'$ sundials and 
 $r+t'-r'$ generic points. That is, by the inductive hypothesis,
  $$\dim (I_{Tr_H  {\widetilde W} })_{d} \leq {d+n-1 \choose {n-1}}- 2(s-s')(d+1)-(r+t'-r')
  $$
  $$={d+n-1 \choose {n-1}}- (t-t')(d+1)-  {d+n \choose {n}}+t(d+1)-t'
  $$
  $$+{d+n-1 \choose {n}}-t'd =0.
  $$
  It follows that $\dim (I_{Tr_H  {\widetilde W} })_{d} =0$.
  
 Now  recall that
$$
 Tr_H  {\widetilde T} =  C_1 + \dots + C_{ r' }   + \widehat C_{r'+1} + \dots +\widehat C_{ s-s' } +L_{s+1,1}   $$
 $$
 + Tr_H ( 2R_{s+1}|_{H_{s+1}}  +\widehat C_{s-s'+1} + \dots +\widehat C_{ s } ),
$$
that is, the trace $Tr_H  {\widetilde T} $ is the union of  $r'$ degenerate conics, $s-s'-r'$ sundials, a generic line, the scheme $2R_{s+1}|_{H_{s+1} \cap H}$ and 
 $2s'$ generic points. Since $2s' \geq r'$ (see the Appendix, Lemma \ref{ss'rr'}(c)), by Remark \ref{degenerare2rette}, the dimension in degree $d$ of $Tr_H  {\widetilde T} $ is not more than the dimension, in   degree $d$, of the  scheme which is the  union of $s-s'$ sundials, a line  and 
 $2s'-r'$ generic points (we ignore the scheme $2R_{s+1}|_{H_{s+1} \cap H} $), that is, by the inductive hypothesis,
  $$\dim (I_{Tr_H  {\widetilde T} })_{d} \leq {d+n-1 \choose {n-1}}- 2(s-s')(d+1)-(d+1) - (2s'-r')
  $$
   $$= {d+n-1 \choose {n-1}}- (t-t')(d+1)-d - t'+r'
  $$
     $$= {d+n-1 \choose {n-1}}- t(d+1)+t'd-d +{d+n-1 \choose {n}}-t'd
  $$
  $$={d+n \choose {n}}- t(d+1)-d  = r-d \leq 0.$$

  It follows that $\dim (I_{Tr_H  {\widetilde T} })_{d} =0$.

So we have proved that 
  $$\dim (I_{Res_H  {\widetilde W} })_{d-1} = \dim  (I_{Tr_H  {\widetilde W} })_{d}=0,
  $$
  $$\dim  (I_{Res_H  {\widetilde T} })_{d-1}=
 \dim   (I_{Tr_H  {\widetilde T} })_{d}=0,$$
 hence, by Lemma \ref{Castelnuovo}, we are done.
\\

%%%%%%%%%%%%%%%

{\it Case b):} $t'$ even.\\
 Notice that $s-s'-r' \geq 0$ (see  the Appendix,  Lemma \ref{ss'rr'}(a)). 
 Using this inequality we construct
$\widetilde W$, a specialization of $W$,
as follows.
Let $H$ be  a generic hyperplane:
\\
$\bullet$   specialize the  $r'$ sundials  $\widehat C_1, \dots , \widehat C_{r'}$, in such a way that $L_{i,1}+L_{i,2}\subset H$, but   $2R_i|_{H_i} \not\subset H $, for $1 \leq i \leq r'$; \\
$\bullet$  specialize the   $s - s' - r'$  sundials $\widehat C_{r'+1}$,  $\dots, \widehat C_{s-s'}$  into $H$ ; \\
$\bullet$  specialize the points $P_1, \dots, P_r$  into $H$; \\
$\bullet$   if $t$ is odd, specialize the line $M$ into $H$.\\

Similarly, we specialize $T$ to  
 $\widetilde T$ by: \\
 $\bullet$ specializing the first   $r'$ sundials,  $\widehat C_1, \dots , \widehat C_{r'}$, in such a way that $L_{i,1}+L_{i,2}\subset H$, but  $2R_i|_{H_i} \not\subset H $; \\
 $\bullet$   if $t$ is odd, specializing  the $s+1 - s' - r'$  sundials   $\widehat C_{r'+1}$,  $\dots, \widehat C_{s+1-s'}$ into $H$; \\
   $\bullet$ if $t$ is even, specializing  the $s - s' - r'$  sundials   $\widehat C_{r'+1}$,  $\dots, \widehat C_{s-s'}$ and the line $M$ into $H$.

  We have:
$$
 Res_H  {\widetilde W} = 
   R_1 + \dots + R_{ r' }   + \widehat C_{s-s'+1} + \dots +\widehat C_{ s }  ;
$$
$$
 Res_H  {\widetilde T} =  R_1 + \dots + R_{ r' }   + \widehat C_{s-s'+1} + \dots +\widehat C_{ s } ;
$$

for $t$ even:
$$
 Tr_H  {\widetilde W} = 
 C_1 + \dots + C_{ r' }   + \widehat C_{r'+1} + \dots +\widehat C_{ s-s' } +P_1+\dots P_r +  $$$$
 + Tr_H ( \widehat C_{s-s'+1} + \dots +\widehat C_{ s } ) ;
$$
$$
 Tr_H  {\widetilde T} =  C_1 + \dots + C_{ r' }   + \widehat C_{r'+1} + \dots +\widehat C_{ s-s' } +M +$$
 $$
 + Tr_H ( \widehat C_{s-s'+1} + \dots +\widehat C_{ s } );
$$

for $t$ odd:
$$
 Tr_H  {\widetilde W} = 
 C_1 + \dots + C_{ r' }   + \widehat C_{r'+1} + \dots +\widehat C_{ s-s' } +P_1+\dots P_r +  $$$$
 + Tr_H ( \widehat C_{s-s'+1} + \dots +\widehat C_{ s } ) +  M;
$$
$$
 Tr_H  {\widetilde T} =  C_1 + \dots + C_{ r' }   + \widehat C_{r'+1} + \dots +\widehat C_{ s+1-s' } + $$
 $$
 + Tr_H ( \widehat C_{s-s'+1} + \dots +\widehat C_{ s } ).
$$

Hence $Res_H  {\widetilde W} $ and $Res_H  {\widetilde T} $ are the union of 
$s'$ generic sundials and the $r'$ points $ R_1, \dots , R_{ r' } $.  As in Case a), these $r'$ points are generic points lying  on $H$, and for $d \leq 5$ these points are generic also on $\mathbb P^n$. So  for $d \leq 5$ by the inductive hypothesis we get 
$$\dim (I_{Res_H  {\widetilde W} })_{d-1} =\dim (I_{Res_H  {\widetilde T} })_{d-1}=0.
$$
For $d>5$, consider $Res_H  {\widetilde W } -(R_1 + \dots + R_{ r' }) $ and 
$Res_H  {\widetilde T } -(R_1 + \dots + R_{ r' })$.
These schemes are the union of 
$s'$ generic sundials, hence by the inductive hypothesis we have
$$\dim (I_{Res_H  {\widetilde W} -(R_1 + \dots + R_{ r' })})_{d-1} =\dim (I_{Res_H  {\widetilde T} -(R_1 + \dots + R_{ r' })})_{d-1}=r'.
$$
Moreover  (see the  Appendix, Lemma \ref{disuguaglianza1} for computation):
$$\dim (I_{Res_H  {\widetilde W} -(R_1 + \dots + R_{ r' })+H })_{d-1} =
\dim (I_{Res_H  {\widetilde T} -(R_1 + \dots + R_{ r' })+H })_{d-1} 
$$
$$=\max \left \{
{d-2+n \choose n} - t'(d-1) ; 0 \right \}= 0.$$
Hence, by
 Lemma \ref{AggiungerePuntiSuY} (with $Y=H$ ) we get 
 $$\dim (I_{Res_H  {\widetilde W} })_{d-1} =\dim (I_{Res_H  {\widetilde T} })_{d-1}=0.
$$ 
 
 Now we compute $\dim (I_{Tr_H  {\widetilde W} })_{d} $.
 
  Recall that
 $$
 Tr_H  {\widetilde W} = 
 C_1 + \dots + C_{ r' }   + \widehat C_{r'+1} + \dots +\widehat C_{ s-s' } +P_1+\dots P_r +  $$$$
 + Tr_H ( \widehat C_{s-s'+1} + \dots +\widehat C_{ s } ) + \hbox{ (if $t $  is odd) }  M;
$$
 so the trace $Tr_H  {\widetilde W} $ is the  union of  $r'$ degenerate conics, $s-s'-r'$ sundials,
 $r+2s'=r+t'$ generic points and, if $t$ is odd, a generic line. Since $r+t' \geq r'$ (see the Appendix, Lemma \ref{ss'rr'}(b)), by Remark \ref{degenerare2rette}, the dimension in degree $d$ of $Tr_H  {\widetilde W} $ is not more than the dimension in   degree $d$ of a scheme which is the union of $s-s'$ sundials and 
 $r+t'-r'$ generic points and, if $t$ is odd, a generic line. That is, by the inductive hypothesis,
  $$\dim (I_{Tr_H  {\widetilde W} })_{d}$$
  $$
   \leq {d+n-1 \choose {n-1}}- 2(s-s')(d+1)-(r+t'-r')+ \hbox{(if $t$ is odd) } (d+1)
  $$
  $$={d+n-1 \choose {n-1}}- (t-t')(d+1)-  {d+n \choose {n}}+t(d+1)-t'+
  $$
  $$+{d+n-1 \choose {n}}-t'd =0.
  $$
  It follows that $\dim (I_{Tr_H  {\widetilde W} })_{d} =0$.
  
 Now  we compute  $\dim (I_{Tr_H  {\widetilde T} })_{d} $. 
Recall that for $t$ odd we have
$$
 Tr_H  {\widetilde T} =  C_1 + \dots + C_{ r' }   + \widehat C_{r'+1} + \dots +\widehat C_{ s+1-s' } + $$
 $$
 + Tr_H ( \widehat C_{s-s'+1} + \dots +\widehat C_{ s } );
$$
hence in this case the trace $Tr_H  {\widetilde T} $ is the union of  $r'$ degenerate conics, $s+1-s'-r'$ sundials and  $2s'$ generic points. Since $2s' \geq r'$ (see the Appendix, Lemma \ref{ss'rr'}(c)), by Remark \ref{degenerare2rette}, the dimension in degree $d$ of $Tr_H  {\widetilde T} $ is not more than the dimension in   degree $d$ of a scheme which is the union of $s+1-s'$ sundials  and 
 $2s'-r'$ generic points. Hence, by the inductive hypothesis,
 $$\dim (I_{Tr_H  {\widetilde T} })_{d} \leq {d+n-1 \choose {n-1}}- 2(s+1-s')(d+1) - (2s'-r')
  $$
   $$= {d+n-1 \choose {n-1}}- (t-t' +1)(d+1) - t'+r'
  $$
     $$= {d+n-1 \choose {n-1}}- t(d+1)+t'd-(d+1) +{d+n-1 \choose {n}}-t'd
  $$
  $$={d+n \choose {n}}- t(d+1)-(d+1)  = r-(d+1) \leq 0.$$
\\

For $t$ even we have
$$
 Tr_H  {\widetilde T} =  C_1 + \dots + C_{ r' }   + \widehat C_{r'+1} + \dots +\widehat C_{ s-s' } +M +$$
 $$
 + Tr_H ( \widehat C_{s-s'+1} + \dots +\widehat C_{ s } ).
$$
hence the trace $Tr_H  {\widetilde T} $ is the union of  $r'$ degenerate conics, $s-s'-r'$ sundials, a generic line and 
 $2s'$ generic points. Since $2s' \geq r'$ (see the Appendix, Lemma \ref{ss'rr'}(c)), by Remark \ref{degenerare2rette}, the dimension in degree $d$ of $Tr_H  {\widetilde T} $ is not more than the dimension in   degree $d$ of a scheme which is the union of $s-s'$ sundials, a generic line  and 
 $2s'-r'$ generic points. So, by the inductive hypothesis,
  $$\dim (I_{Tr_H  {\widetilde T} })_{d} \leq {d+n-1 \choose {n-1}}- 2(s-s')(d+1)-(d+1) - (2s'-r')
  $$
   $$= {d+n-1 \choose {n-1}}- (t-t')(d+1)-(d+1) - t'+r'
  $$
     $$= {d+n-1 \choose {n-1}}- t(d+1)+t'd-(d+1) +{d+n-1 \choose {n}}-t'd
  $$
  $$={d+n \choose {n}}- t(d+1)-(d+1)  = r-(d+1) \leq 0.$$

  It follows that $\dim (I_{Tr_H  {\widetilde T} })_{d} =0$.

So we have proved that 
  $$\dim (I_{Res_H  {\widetilde W} })_{d-1} = \dim  (I_{Tr_H  {\widetilde W} })_{d}=0,
  $$
  $$\dim  (I_{Res_H  {\widetilde T} })_{d-1}=
 \dim   (I_{Tr_H  {\widetilde T} })_{d}=0,$$
 then by Lemma \ref{Castelnuovo} we are done.
\\

{\it Case c):} $t$ even and $t'$ odd.\\
In this case we have

$$W =
  \widehat C_1 + \dots +\widehat  C_{  s }  +P_1+\dots +P_r  \ ,     $$
     $$T =
  \widehat C_1 + \dots +\widehat  C_{   s }  +M    .  $$

  Notice that  $s-s'-r' -1\geq 0$ (this inequality is treated in the Appendix,  Lemma \ref{ss'rr'}(a)).  As in cases {\it a)} and  {\it b)}, we can use this  inequality to construct  $\widetilde W$, a specialization of $W$, as follows: pick  $H$ a generic hyperplane, and
  \\
$\bullet$  specialize the  $r'$ sundials  $\widehat C_1, \dots , \widehat C_{r'}$, in such a way that $L_{i,1}+L_{i,2}\subset H$, but   $2R_i|_{H_i} \not\subset H $, for $1 \leq i \leq r'$; \\
 $\bullet$  specialize  the $s - s' - r' -1$  sundials   $\widehat C_{r'+1}$,  $\dots, \widehat C_{s-s' -1}$ into $H$; \\
   $\bullet$ specialize the  sundial $\widehat C_{s-s'}$  in such a way that $L_{s-s',1} \subset H$,  but  $L_{s-s',2} \not\subset H $;
 \\
  $\bullet$ specialize the points $P_1, \dots, P_r$ into $H$.
  
Let  $\widetilde T$ be the scheme obtained from $T$  by  specializing: \\
 $\bullet$  the first  $r'$ sundials,  $\widehat C_1, \dots , \widehat C_{r'}$, in such a way that $L_{i,1}+L_{i,2}\subset H$, but  $2R_i|_{H_i} \not\subset H $; \\
 $\bullet$   the next $s - s' - r'$  sundials ,  $\widehat C_{r'+1}$,  $\dots, \widehat C_{s-s'}$,
 into  $H $.\\
  
  As in  cases {\it a)} and  {\it b)}, we will prove that 
  $$\dim (I_{Res_H  {\widetilde W} })_{d-1} = \dim  (I_{Tr_H  {\widetilde W} })_{d}=\dim  (I_{Res_H  {\widetilde T} })_{d-1}=
 \dim   (I_{Tr_H  {\widetilde T} })_{d}=0.$$

  We have:
$$
 Res_H  {\widetilde W} = 
   R_1 + \dots + R_{ r' }   + \widehat C_{s-s'+1} + \dots +\widehat C_{ s } +L_{s-s',2} ;
$$
$$
 Res_H  {\widetilde T} =  R_1 + \dots + R_{ r' }   + \widehat C_{s-s'+1} + \dots +\widehat C_{ s } +M;
$$
$$
 Tr_H  {\widetilde W} = 
 C_1 + \dots + C_{ r' }   + \widehat C_{r'+1} + \dots +\widehat C_{ s-s' -1} +P_1+\dots P_r +  $$$$
 +L_{s-s',1}   +Tr_H ( 2R_{s-s'}|_{H_{s-s'}} +\widehat C_{s-s'+1} + \dots +\widehat C_{ s } );
$$
$$
 Tr_H  {\widetilde T} =  C_1 + \dots + C_{ r' }   + \widehat C_{r'+1} + \dots +\widehat C_{ s-s' } + $$
 $$
 + Tr_H ( \widehat C_{s-s'+1} + \dots +\widehat C_{ s } +M).
$$
So we have that $Res_H  {\widetilde W} $ and $Res_H  {\widetilde T} $ are the  union of 
$s'$ generic sundials, a generic  line and the $r'$ points $ R_1, \dots , R_{ r' } $.  These $r'$ points 
lie on $H$, and are  generic points  on $H \simeq \mathbb P^{n-1}$, so as  in cases {\it a)} and  {\it b)},  for $d \leq 5$ by the inductive hypothesis we immediately get 
$$\dim (I_{Res_H  {\widetilde W} })_{d-1} =\dim (I_{Res_H  {\widetilde T} })_{d-1}=0.
$$
For $d>5$, consider $Res_H  {\widetilde W } -(R_1 + \dots + R_{ r' }) $ and 
$Res_H  {\widetilde T } -(R_1 + \dots + R_{ r' })$.
By the inductive hypothesis we have
$$\dim (I_{Res_H  {\widetilde W} -(R_1 + \dots + R_{ r' })})_{d-1} =\dim (I_{Res_H  {\widetilde T} -(R_1 + \dots + R_{ r' })})_{d-1}=r'.
$$
Moreover 
$$\dim (I_{Res_H  {\widetilde W} -(R_1 + \dots + R_{ r' })+H })_{d-1} =
\dim (I_{Res_H  {\widetilde T} -(R_1 + \dots + R_{ r' })+H })_{d-1} 
$$
$$=\max \left \{
{d-2+n \choose n} - (2s'+1)(d-1) ; 0 \right \}=$$
$$=\max \left \{
{d-2+n \choose n} - t'(d-1) ; 0 \right \}= 0,$$
 (the last equality is proved in the Appendix, Lemma \ref{disuguaglianza1} ). 
 
 Hence, by
 Lemma \ref{AggiungerePuntiSuY} (with $Y=H$ )  we get 
 $$\dim (I_{Res_H  {\widetilde W} })_{d-1} =\dim (I_{Res_H  {\widetilde T} })_{d-1}=0.
$$
 
 Now we compute $\dim (I_{Tr_H  {\widetilde W} })_{d} $ and $\dim (I_{Tr_H  {\widetilde T} })_{d} $.
 
 Recall that 
 $$
 Tr_H  {\widetilde T} =  C_1 + \dots + C_{ r' }   + \widehat C_{r'+1} + \dots +\widehat C_{ s-s' } + $$
 $$
 + Tr_H ( \widehat C_{s-s'+1} + \dots +\widehat C_{ s } +M).
$$
Hence the trace $Tr_H  {\widetilde T} $ is the  union of  $r'$ degenerate conics, $s-s'-r'$ sundials,   and 
 $2s'+1$ generic points.  Since $2s'+1 \geq r'$ (see the Appendix, Lemma \ref{ss'rr'}(c)), by Remark \ref{degenerare2rette}, the dimension, in degree $d$, of $Tr_H  {\widetilde T} $ is not more than the dimension, in   degree $d$, of a scheme which is the union of $s-s'$ sundials and
 $2s'-r'$ generic points. Thus, by the inductive hypothesis,
  $$\dim (I_{Tr_H  {\widetilde T} })_{d} \leq {d+n-1 \choose {n-1}}- 2(s-s')(d+1)- (2s'+1-r')
  $$
     $$= {d+n-1 \choose {n-1}}- t(d+1)+t'd-d -1+r' 
  $$
    $$= {d+n-1 \choose {n-1}}- t(d+1)+t'd-d -1+{d+n-1 \choose {n}}-t'd
  $$
  $$={d+n \choose {n}}- t(d+1)-d  -1= r-d-1 \leq 0.$$

  It follows that $\dim (I_{Tr_H  {\widetilde T} })_{d} =0$.

Finally, recall that 
$$
 Tr_H  {\widetilde W} = 
 C_1 + \dots + C_{ r' }   + \widehat C_{r'+1} + \dots +\widehat C_{ s-s' -1} +P_1+\dots P_r +  $$
 $$
 +L_{s-s',1}  +Tr_H ( 2R_{s-s'}|_{H_{s-s'}}  +\widehat C_{s-s'+1} + \dots +\widehat C_{ s } );
$$
 so the trace $Tr_H  {\widetilde W} $ is the  union of  $r'$ degenerate conics, $s-s'-r'-1$ sundials,
 $r+2s'=r+t'-1$ generic points., and a line with an embedded point (that is the scheme 
 $L_{s-s',1}  +2R_{s-s'}|_{H_{s-s'}\cap H} $).
 
 Let $\bar L \subset H$ be a generic  line  through $R_{s-s'}$ and let  $\widehat C$ denote  the sundial 
 $\bar L+L_{s-s',1}  +2R_{s-s'}|_{H_{s-s'}\cap H} $.
 
Now specialize $d$ of the $r+t'-1$ generic points of $Tr_H  {\widetilde W}$ onto the  line $\bar L $, so that the hypersurfaces defined by the forms of 
$(I_{Tr_H  {\widetilde W}})_d$  have
the sundial $\widehat C$ in their base locus.
 
 Since $r+t' -1-d \geq r'$ (see the Appendix, Lemma \ref{ss'rr'}(b)), by Remark \ref{degenerare2rette}, the dimension, in degree $d$, of $Tr_H  {\widetilde W} $ is not more than the dimension, in degree $d$, of a scheme which is the  union of  $r+t'-1-d-r'$ generic points and the $s-s'$ sundials
 $$ 
 \widehat C_1 + \dots + \widehat C_{ r' }   + \widehat C_{r'+1} + \dots +\widehat C_{ s-s' -1}
 + \widehat C. $$
 Thus, by the inductive hypothesis,
  $$\dim (I_{Tr_H  {\widetilde W} })_{d} \leq {d+n-1 \choose {n-1}}- 2(s-s')(d+1)-(r+t'-1-d-r')
  $$
  $$={d+n-1 \choose {n-1}}- (t-t'+1)(d+1)-  {d+n \choose {n}}+t(d+1)-t'+1+d
  $$
  $$+{d+n-1 \choose {n}}-t'd =0.
  $$
  It follows that $\dim (I_{Tr_H  {\widetilde W} })_{d} =0$.
  So we have proved that 
  $$\dim (I_{Res_H  {\widetilde W} })_{d-1} = \dim  (I_{Tr_H  {\widetilde W} })_{d}=0,
  $$
  $$\dim  (I_{Res_H  {\widetilde T} })_{d-1}=
 \dim   (I_{Tr_H  {\widetilde T} })_{d}=0.$$
By Lemma \ref{Castelnuovo} we are done.

\end{proof}

\medskip
%%%%%%%%%%%%%%%%%%%%%%%%%%%
%%%%%%%%%%%%%%%%%%%%%%%%%%%%%
%%%%%%%%%%%%%%%%%%%%%%%%%%%%%%%

\section{Appendix}

\begin{lem}  \label{ss'rr'}  Let $n \geq 4$, $d \geq 2$,
$$t=  \left\lfloor{ {d+n \choose n} \over {d+1} } \right\rfloor, 
  \ \ \ \ \   r= {d+n \choose n} - t(d+1)
\ \ \ \ \     s = \left\lfloor{ \frac {t} 2 } \right\rfloor .$$

$$t'=  \left\lfloor{ {d-1+n \choose n} \over {d} } \right\rfloor, 
  \ \ \ \ \   r'= {d-1+n \choose n} - t'd
\ \ \ \ \     s' = \left\lfloor{ \frac {t'} 2 } \right\rfloor .$$
then 

\begin{itemize}
\item[(a)]
\begin{itemize} \item[$\bullet$]  for $t$ odd and  $t'$ odd, or for $t'$ even: $s-s'-r' \geq0;$
\item[$\bullet$]   for $t$ even and  $t'$ odd: $s-s'-r' -1\geq 0$
\end{itemize}
\medskip

\item[(b)] \begin{itemize} \item[$\bullet$]   for $t$ odd and  $t'$ odd, or for $t'$ even:
 $r+t' \geq r' ;$ 
 \item[$\bullet$] for $t$ even and  $t'$ odd: 
 $r+t' -1-d \geq r'$ 
 \end{itemize}
\medskip

\item [(c)] \begin{itemize} \item[$\bullet$] 
$2s' \geq r' .$
  \end{itemize}

\end{itemize}

\end{lem}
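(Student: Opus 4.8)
\emph{Setup and a key identity.} The plan is to translate every inequality into a statement about the four integers $t,t',r,r'$ and then to bound binomial coefficients. First I would set $N=\binom{d+n}{n}$, $N_1=\binom{d+n-1}{n}$, $M=\binom{d+n-1}{n-1}$, so that Pascal's identity reads $N=N_1+M$, and record the two ``division identities''
$$\frac{N}{d+1}=\frac1n\binom{d+n}{n-1},\qquad \frac{N_1}{d}=\frac1n\binom{d+n-1}{n-1}=\frac Mn,$$
each immediate from $\binom{m}{k+1}=\frac{m-k}{k+1}\binom mk$. Subtracting and abbreviating $A:=\frac{N}{d+1}-\frac{N_1}{d}$, one gets $A=\frac1n\binom{d+n-1}{n-2}=\frac{n-1}{n(d+1)}M$, and from $N=t(d+1)+r$, $N_1=t'd+r'$ one reads off
$$t-t'=A+\frac{r'}{d}-\frac{r}{d+1}.$$

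\emph{The elementary parts.} For (c) it suffices to prove $t'\ge d$: since $N_1=\binom{d+n-1}{d-1}\ge\binom{d+3}{4}$ and $(d+1)(d+2)(d+3)\ge 24d$ for $d\ge2$, we get $N_1\ge d^2$, hence $t'=\lfloor N_1/d\rfloor\ge d$ and therefore $2s'=2\lfloor t'/2\rfloor\ge t'-1\ge d-1\ge r'$. The first bullet of (b) then follows at once from $r+t'\ge t'\ge d>r'$. For the second bullet of (b) one needs $r+t'\ge r'+d+1$; when $d\ge4$ the same computation sharpens to $N_1\ge 2d^2$, so $t'\ge 2d\ge (d-1)+d+1\ge r'+d+1$, while for $d\in\{2,3\}$ the quantity $t'$ still grows with $n$, so only finitely many $n$ remain, and one checks by writing out $t,t',r,r'$ that in those (few) cases either ``$t$ even, $t'$ odd'' fails or $t'$ is already large enough.

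\emph{Part (a).} The first move is to observe that, in every parity configuration, the conclusions of part (a) are equivalent to the single inequality $\lfloor(t-t')/2\rfloor\ge r'$, hence --- by monotonicity of $x\mapsto\lfloor x/2\rfloor$ --- to $t-t'\ge 2r'$: one checks case by case that $s-s'=\lfloor(t-t')/2\rfloor$ when $t,t'$ have equal parity or $t'$ is even, and $s-s'=\lfloor(t-t')/2\rfloor+1$ when $t$ is even and $t'$ odd, which is exactly why the statement carries a ``$-1$'' in that last case. Substituting the key identity, $t-t'\ge 2r'$ becomes $A\ge r'\frac{2d-1}{d}+\frac{r}{d+1}$, and the right-hand side is at most $2d-2+\frac1{d(d+1)}$ (attained at $r'=d-1$, $r=d$). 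Since $A=\frac{n-1}{n(d+1)}M$ equals $\frac{(d+2)(d+3)}{8}$ for $n=4$ and is at least $\frac{(d+2)(d+3)(d+4)}{30}$ for $n\ge5$, a single one-variable inequality settles every $d\ge2$ when $n\ge5$ and every $d\ge9$ (and, incidentally, $d=2$) when $n=4$; the remaining pairs $n=4$, $3\le d\le8$ are checked by hand.

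\emph{Expected difficulty.} The step I expect to require the most care is (a): one must get the parity bookkeeping exactly right so that the ``$-1$'' in the ``$t$ even, $t'$ odd'' case is absorbed cleanly, and then keep the binomial estimate sharp enough that only a genuinely finite list of small $(n,d)$ survives --- a cruder lower bound on $A$ leaves an infinite family of exceptions (one initial segment of $d$'s for each $n$).
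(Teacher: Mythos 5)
Your proposal is correct and follows essentially the same route as the paper: translate each inequality into an estimate on $t,t',r,r'$ using $r\le d$, $r'\le d-1$ and the growth of the binomial coefficients, settle all $n\ge 5$ and large $d$ for $n=4$ by a one-variable inequality, and dispose of the finitely many remaining pairs $(4,d)$ by direct verification (your deferred hand checks for $n=4$, $3\le d\le 8$ and for $d\in\{2,3\}$ in (b) do hold, matching the paper's table). Your exact parity reduction of (a) to $t-t'\ge 2r'$ is marginally sharper than the paper's sufficient condition $t-t'\ge 2r'+1$, and your $t'\ge d$ argument even absorbs the paper's special case $(n,d)=(4,2)$ in (c), but these are refinements of the same computation rather than a different method.
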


\begin{proof} (a)  For $t$ odd and  $t'$ odd, or for $t'$ even,
since $s-s'-r' =  \left\lfloor  {\frac t 2} \right\rfloor -  \left\lfloor  {\frac {t'} 2} \right\rfloor -r'$, 
 it sufficies to verify that 
 $$t-1-t'-2r' \geq 0.$$ 
 In case $t$ even and  $t'$ odd, we have $s-s'-r' -1=  {\frac t 2}-  {\frac {t'-1} 2} -r'-1 =
  {\frac 12} (t-t'-2r'-1) $, hence also in this case  it is enough to verify that 
 $t-1-t'-2r' \geq 0.$ 
 
 We have
 
 $$t-t'-1-2r'  ={ \frac {{d+n \choose n} -r} {d+1} } - { \frac {{d+n-1 \choose n} -r'} {d} } -1-2r' \geq 0 
$$
$$\iff  d {d+n \choose n} -rd -  (d+1) {d+n-1 \choose n} +r'(d+1) -(1+2r')d(d+1) \geq 0 
$$
$$\iff  {d+n-1 \choose n}(n-1) -rd - r' (d+1)(2d-1)-d(d+1) \geq 0 
$$
If $n \geq 5$, since $r \leq d$, $r' \leq d-1$ and $d \geq 2$ we have 
$$  {d+n-1 \choose n}(n-1) -rd - r' (d+1)(2d-1)-d(d+1) 
$$
$$ \geq 4 {d+4 \choose 5} -d^2 - (d-1)(d+1)(2d-1)-d(d+1) 
$$
$$ = {\frac 1 {30}} (d+4)(d+3)(d+2)(d+1)d - 2d^2(d+1)+d^2+d-1$$
$$
 ={\frac 1 {30}} d(d+1) (d-1)(d-2)(d+12) +d^2+d-1\geq 5,$$
 and we are done for $n \geq5$.
 
 For $n=4$, since $r \leq d$ and  $r' \leq d-1$ we have
 $$  {d+n-1 \choose n}(n-1) -rd - r' (d+1)(2d-1)-d(d+1) 
$$
$$ \geq 3 {d+3 \choose 4} -d^2 - (d-1)(d+1)(2d-1)-d(d+1) 
$$
$$ = {\frac 1 {8}}d(d+1)((d-1)(d-10)+4)-1,$$
so for $d \geq 10$ we are done. 

For $n=4$ and $2 \leq d \leq 9$ we have:
\\

$\begin{matrix}
d      &   \ t       &  \  t'      &  \  r'  &  \    s - s'-r'  &  \   s - s'-r'  -1 \\
\\
2      &  \  5       &  \  2 & \   1   &     \            0         &     \              \\
3      & \   8       &  \  5 & \   0   &     \     &     \    1  \\
4      & \   14        &  \  8 & \   3   &      \    0  \\
5      & \   21        & \   14   &    \  0  &  \    3  \\
6      & \   30  &  \  21 & \   0  &      \    &      \    4  \\
7      & \   41  &  \  30 & \   0  &    \  5    \\
8      & \    55  &  \  41 &     \  2  &  \    5  \\
9      & \   71  &  \  55 & \   0    &  \    8 \\
\end{matrix}
$
\\

This table shows that for $t$  even and  $t'$  odd,
we have $s-s'-r' -1\geq 0$, while  for $t$ odd and  $t'$ odd, or for $t'$ even we have
 $s-s'-r' \geq0, $ and this completes the proof of  (a).
\\

(b) For $t$ odd and  $t'$ odd, or for $t'$ even we have to prove that
$r+t'\geq r' .$
We have
$$r+t'\geq r'  \iff  r+{ \frac  {{d+n-1 \choose n}-r' } {d} }  -r' \geq 0$$
$$ \iff  rd+  {d+n-1 \choose n}-r'(d+1)\geq 0.$$
Since $n \geq4$, $r \geq 0$ , $r' \leq d-1$ and $d \geq 2$ we have
$$ rd+  {d+n-1 \choose n}-r'(d+1) \geq  {d+3 \choose 4}-(d-1)(d+1)$$
$$=\frac 1 {24} (d+1)((d-2)(d^2+7d-4)+16)\geq 2,$$
so it follows that  $r+t'\geq r' .$
\\

For $t$  even and  $t'$  odd we have to show that $r+t' -1-d \geq r'$, or, equivalently,
$$  rd+  {d+n-1 \choose n}-(r'+1)(d+1)\geq 0.$$
For $n \geq5$, since $r \geq 0$ , $r' \leq d-1$ and $d \geq 2$ we have
$$  rd+  {d+n-1 \choose n}-(r'+1)(d+1) $$
$$\geq     {d+4 \choose 5}-d(d+1) = \frac 1 {120} d(d+1)((d+4)(d+3)(d+2)-120)\geq 0.$$
Finally let $n=4$. If $d=2$, then we are not in the case $t$  even and  $t'$  odd, so  we may assume that $d\geq 3$.

Since  $d\geq 3$,  $r \geq 0$ , $r' \leq d-1$ we get
$$  rd+  {d+n-1 \choose n}-(r'+1)(d+1) $$
$$\geq     {d+3 \choose 4}-d(d+1) = \frac 1 {24}d(d+1)( (d+3)(d+2)-24)\geq 3,$$
and (b) is proved.
\\

(c) We have 
 $$2s' - r' \geq 0 \iff t'-1-r' \geq 0 \iff   {d+n-1 \choose n} -r' (d+1)-d \geq 0.$$
Since  $r' \leq d-1$ we  get 
$$ {d+n-1 \choose n} -r' (d+1)-d \geq
 {d+n-1 \choose n}  -d(d+1)+1
$$
which, for $n \geq 5$ and $d \geq 2$, or for $n \geq 4$ and $d \geq 3$, is positive (see the case (b) above for computation).
So we are left with the case $n=4$, $d=2$. In this case we have
 $2s' - r'  = 1,$
and we are done.

\end{proof}

\begin{lem}  \label{disuguaglianza1}  Let $n \geq 4$, $d > 5$, 
$$t'=  \left\lfloor{ {d-1+n \choose n} \over {d} } \right\rfloor, 
  \ \ \ \ \   r'= {d-1+n \choose n} - t'd
\ \ \ \ \     s' = \left\lfloor{ \frac {t'} 2 } \right\rfloor .$$
Then
$$
\max \left \{
{d-2+n \choose n} - t' (d-1) ; 0 \right \}=0.$$
\end{lem}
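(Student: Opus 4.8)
This lemma is a purely arithmetic statement: it asserts that $t'(d-1)\ge\binom{d-2+n}{n}$. (Geometrically, via Theorem~\ref{HH}, this is the assertion that $t'$ generic lines in $\mathbb P^n$ carry no hypersurface of degree $d-1$, but the cleanest route is a direct computation.) The only information available about $t'$ is that the remainder $r'=\binom{d-1+n}{n}-t'd$ satisfies $0\le r'\le d-1$, and the plan is to feed this into elementary binomial estimates. From $r'\le d-1$ we get $t'd\ge\binom{d-1+n}{n}-(d-1)$; multiplying this inequality by the positive number $(d-1)/d$ and using the identity $t'(d-1)=\frac{d-1}{d}\,(t'd)$ gives
$$t'(d-1)\ \ge\ \frac{d-1}{d}\binom{d-1+n}{n}-\frac{(d-1)^2}{d}.$$

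Next I would simplify the leading term. Pascal's identity gives $\binom{d-1+n}{n}=\binom{d-2+n}{n}+\binom{d-2+n}{n-1}$, and comparing factorials gives the ratio $\binom{d-2+n}{n}=\frac{d-1}{n}\binom{d-2+n}{n-1}$; combining these two facts yields
$$\frac{d-1}{d}\binom{d-1+n}{n}-\binom{d-2+n}{n}=\frac{(d-1)(n-1)}{dn}\binom{d-2+n}{n-1}.$$
Substituting into the previous display, the claim $t'(d-1)\ge\binom{d-2+n}{n}$ reduces to
$$\frac{n-1}{n}\binom{d-2+n}{n-1}\ \ge\ d-1,\qquad\text{that is,}\qquad (n-1)\binom{d-2+n}{n-1}\ \ge\ n(d-1).$$

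Finally this last inequality is dispatched using the hypotheses $n\ge4$ and $d>5$. Since $3\le n-1\le(d+n-2)-3$, unimodality of the binomial coefficients gives $\binom{d+n-2}{n-1}\ge\binom{d+n-2}{3}$; together with $n-1\ge3$ this gives
$$(n-1)\binom{d+n-2}{n-1}\ \ge\ 3\binom{d+n-2}{3}\ =\ \frac{(d+n-2)(d+n-3)(d+n-4)}{2}\ \ge\ n(d-1),$$
where the last step uses the crude bounds $d+n-2\ge n$, $d+n-3\ge d-1$ and $d+n-4\ge 2$. This finishes the proof. The argument is entirely mechanical; the only delicate point is keeping the binomial identities of the middle step straight, and observing that $n\ge4$ (hence $n-1\ge3$) together with $d>5$ is exactly what makes the concluding estimate valid — for smaller $n$ or $d$ a short case analysis would be needed, as in the companion Lemma~\ref{ss'rr'}.
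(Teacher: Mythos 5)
Your proof is correct and is essentially the paper's own argument: both start from $t'd={d-1+n\choose n}-r'$ with $0\le r'\le d-1$ and reduce the claim to the same elementary binomial inequality (the paper's version amounts to $(n-1){d-2+n\choose n}\ge r'(d-1)$, which via ${d-2+n\choose n}=\frac{d-1}{n}{d+n-2\choose n-1}$ and $r'\le d-1$ is exactly your $(n-1){d+n-2\choose n-1}\ge n(d-1)$). The only difference is cosmetic: you finish with unimodality and crude factor bounds, while the paper specializes the binomial coefficient to $n=4$ and checks the resulting polynomial inequality in $d$.
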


\begin{proof} It is enough to verify that 
${d-2+n \choose n} - t'(d-1) \leq 0.$

We have 
$$
 {d-2+n \choose n} - t'(d-1) =
{d-2+n \choose n} - \frac {({d-1+n \choose n}-r')} d (d-1)
$$
$$
= {\frac  1 d }\left (
d{d-2+n \choose n} - (d-1) {d-1+n \choose n}+ r'  (d-1)
\right )
$$
$$
= {\frac  1 d }\left (
{d-2+n \choose n} (-1+n)+ r'  (d-1)
\right ).
$$

Since 
$$
{d-2+n \choose n} (n-1)- r'  (d-1) \geq 
3{d-2+4 \choose 4} -  (d-1)^2 $$
$$ \geq {\frac {d-1} 8} (d(d+1)(d+2)-8(d-1)) \geq 0
,
$$
we get the conclusion.

\end{proof}

%%%%%%%%%%%%%%%%%%%%%%%%%%%%
%%%%%%%%%%%%%%%%%%%%%%%%%%
%%%%%%%%%%%%%%%%%%%%%


\begin{thebibliography}{{CoC}04}

\bibitem[AH95]{AH95}
J.~Alexander and A.~Hirschowitz.
\newblock Polynomial interpolation in several variables.
\newblock {\em J. Algebraic Geom.}, 4(2):201--222, 1995.

\bibitem[AOP09]{AOP}
Hirotachi Abo, Giorgio Ottaviani, and Chris Peterson.
\newblock Induction for secant varieties of {S}egre varieties.
\newblock {\em Trans. Amer. Math. Soc.}, 361(2):767--792, 2009.

\bibitem[BCS97]{BuClSh}
P.~B{\"u}rgisser, M.~Clausen, and M.A. Shokrollahi.
\newblock {\em Algebraic Complexity Theory}, volume 315 of {\em Grund. der
  Math. Wiss.}
\newblock Springer, Berlin, 1997.

\bibitem[CCG08]{CaChGe}
E.~Carlini, L.~Chiantini, and A.V. Geramita.
\newblock Complete intersections on general hypersurfaces.
\newblock {\em Michigan Math. J.}, 57:121--136, 2008.

\bibitem[CCG09]{CarCatGer2}
E.~Carlini, M.~V. Catalisano, and A.V. Geramita.
\newblock Bipolynomial hilbert functions.
\newblock {\em J.Algebra, accepted for publication}, 2009.

\bibitem[CCG10]{CarCatGer3}
E.~Carlini, M.~V. Catalisano, and A.V. Geramita.
\newblock Reduced and non-reduced linear spaces: lines and points.
\newblock {\em In preparation}, 2010.

\bibitem[CGG02]{CGG1}
M.~V. Catalisano, A.~V. Geramita, and A.~Gimigliano.
\newblock Ranks of tensors, secant varieties of {S}egre varieties and fat
  points.
\newblock {\em Linear Algebra Appl.}, 355:263--285, 2002.

\bibitem[CGG03]{CGG2}
M.~V. Catalisano, A.~V. Geramita, and A.~Gimigliano.
\newblock Erratum of the publisher to: ``{R}anks of tensors, secant varieties
  of {S}egre varieties and fat points'' [{L}inear {A}lgebra {A}ppl.\ {\bf 355}
  (2002), 263--285; {MR}1930149 (2003g:14070)].
\newblock {\em Linear Algebra Appl.}, 367:347--348, 2003.

\bibitem[CGG05]{CGG3}
M.~V. Catalisano, A.~V. Geramita, and A.~Gimigliano.
\newblock Secant varieties of {G}rassmann varieties.
\newblock {\em Proc. Amer. Math. Soc.}, 133(3):633--642 (electronic), 2005.

\bibitem[CGG07]{CGG5}
M.~V. Catalisano, A.~V. Geramita, and A.~Gimigliano.
\newblock Segre-{V}eronese embeddings of {$\Bbb P^1\times\Bbb P^1\times\Bbb
  P^1$} and their secant varieties.
\newblock {\em Collect. Math.}, 58(1):1--24, 2007.

\bibitem[CGG10]{CGG6}
M.~V. Catalisano, A.~V. Geramita, and A.~Gimigliano.
\newblock Secant varieties of ${\PP ^1}\times \cdots \times {\PP ^1}$
  ($n$-times) are not defective for $n \geq 5$.
\newblock {\em Journal of Algebraic Geometry, accepted for publication}, 2010.

\bibitem[CM96]{ComMour}
P.~Comon and B.~Mourrain.
\newblock Decomposition of quantics in sums of power of linear forms.
\newblock {\em Signal Processing}, 53(2):93--107, 1996.
\newblock Special issue on High-Order Statistics.

\bibitem[{CoC}04]{cocoa}
{CoCoA}Team.
\newblock {{\hbox{\rm C\kern-.13em o\kern-.07em C\kern-.13em o\kern-.15em A}}}:
  a system for doing {C}omputations in {C}ommutative {A}lgebra.
\newblock Available at \/ {\tt http://cocoa.dima.unige.it}, 2004.

\bibitem[HH82]{HartshorneHirschowitz}
R.~Hartshorne and A.~Hirschowitz.
\newblock Droites en position g\'en\'erale dans l'espace projectif.
\newblock In {\em Algebraic geometry ({L}a {R}\'abida, 1981)}, volume 961 of
  {\em Lecture Notes in Math.}, pages 169--188. Springer, Berlin, 1982.

\bibitem[PRW01]{PRW}
Giovanni Pistone, Eva Riccomagno, and Henry~P. Wynn.
\newblock {\em Algebraic statistics}, volume~89 of {\em Monographs on
  Statistics and Applied Probability}.
\newblock Chapman \& Hall/CRC, Boca Raton, FL, 2001.
\newblock Computational commutative algebra in statistics.

\end{thebibliography}
\end{document}